\newcommand\mynobreakpar{\par\nobreak\@afterheading}
\newtheorem{thm}{Theorem}[section]
\newtheorem{cor}[thm]{Corollary}
\newtheorem{lem}[thm]{Lemma}
\newtheorem{prop}[thm]{Proposition}
\theoremstyle{definition}
\newtheorem{defn}[thm]{Definition}
\newtheorem{ass}[thm]{Assumption}
\theoremstyle{remark}
\newtheorem{rem}[thm]{Remark}
\newtheorem{exa}[thm]{Example}
\numberwithin{equation}{section}
		\DeclareMathOperator*\essinf{essinf}
		\newcommand\tsum{\textstyle\sum\nolimits}
		\newcommand{\tot}{\tfrac{1}{2}} % one half in a small-font frac
		\newcommand{\oo}[1]{\tfrac{1}{#1}}
		\newcommand{\abs}[1]{\left| #1 \right|} % absolute value
		\newcommand{\ab}[1]{\langle #1 \rangle} % quadratic variation
		\newcommand{\set}[1]{\{#1\}} % curly brackets
		\newcommand{\sets}[2]{\set{#1\,:\,#2}} % a set with "such that"
		\newcommand{\seq}[1]{\set{#1_n}_{n\in\N}} % a sequence indexed by n
		\newcommand{\norm}[1]{{||#1||}} % norm enclosure
		\newcommand{\Bnorm}[1]{{\Big|\Big|#1\Big|\Big|}} % norm enclosure
		\newcommand{\pare}[1]{\left(#1\right)}
		\newcommand{\bra}[1]{\left[#1\right]}
		\newcommand{\prf}[1]{ \{ #1 \}_{t\in [0,T]}}
		\providecommand{\R}{} \renewcommand{\R}{{\mathbb R}}
		\providecommand{\N}{} \renewcommand{\N}{{\mathbb N}}
		\newcommand{\PP}{{\mathbb P}}
		\newcommand{\QQ}{{\mathbb Q}}
		\newcommand{\EE}{{\mathbb E}}
		\newcommand{\ee}[1]{ \bE \left[ #1 \right] }
		\newcommand{\FFF}{{\mathbb F}}
		\newcommand{\EN}{{\mathcal E}}
		\newcommand{\eps}{\varepsilon}
		\newcommand{\ld}{\lambda}
		\newcommand{\Ld}{\Lambda}
		\newcommand{\vp}{\varphi}
		\newcommand{\el}{{\mathbb L}} %l-pees
		\newcommand{\lzer}{\el^0}
		\newcommand{\lone}{\el^1}
		\newcommand{\ltwo}{\el^2}
		\newcommand{\lpee}{\el^p}
		\newcommand{\linf}{\el^{\infty}}
		\newcommand{\define}[1]{{\textbf{#1}}}
		\newcommand{\upi}[1]{\suppar{#1}{i}}
			\newcounter{notecounter}
			\newcommand{\efor}{\text{ for }}
			\newcommand{\eand}{\text{ and }}
			\newcommand{\ewhere}{\text{ where }}
			\newcommand{\cd}{c\` adl\` ag } % cadlag phrase
			\renewcommand\aa{{\boldsymbol{\alpha}}}
			\newcommand\af{{\boldsymbol{f}}}
			\newcommand\ag{{\boldsymbol{g}}}
			\newcommand\tmu{{\tilde{\mu}}}
			\newcommand\tnu{{\tilde{\nu}}}
			\newcommand\tp{{\tilde{p}}}
			\newcommand\ax{{\boldsymbol{x}}}
			\newcommand\sA{{\mathcal A}}
			\newcommand\sB{{\mathcal B}}
			\newcommand\bD{{\mathbb D}}
			\newcommand\aE{{\boldsymbol{E}}}
			\newcommand\bE{{\mathbb E}}
			\newcommand\sF{{\mathcal F}}
			\newcommand\sM{{\mathcal M}}
			\newcommand\sP{{\mathcal P}}
			\newcommand\sQ{{\mathcal Q}}
			\newcommand\bR{{\mathbb R}}
			\newcommand\sS{{\mathcal S}}
			\newcommand\sT{{\mathcal T}}
			\newcommand\aY{{\boldsymbol{Y}}}
			\newcommand\tY{{\tilde{Y}}}
			\newcommand{\bmo}{\textrm{bmo}}
			\newcommand{\BMO}{\textrm{BMO}}
			\newcommand{\EBMO}{\textrm{EBMO}}
			\newcommand{\sinf}{\sS^{\infty}}
			\newcommand{\hPP}{\hat{\PP}}
			\newcommand{\abst}[1]{\bm{\vert}#1\bm{\vert}_{\mathrm{2}}}
			\newcommand{\heetau}[1]{ \bE^{\hPP}_{\tau} \left[ #1 \right] }
			\newcommand{\amu}{\boldsymbol{\mu}}
			\newcommand{\anu}{\boldsymbol{\nu}}
			\newcommand{\tld}{\tilde{\ld}}
			\renewcommand{\tmu}{\tilde{\mu}}
			\renewcommand{\tnu}{\tilde{\nu}}
			\newcommand{\tanu}{\tilde{\anu}}
			\newcommand{\upl}[1]{{#1}^{\ld}}
			\renewcommand{\upi}[1]{{#1}^i}
			\newcommand{\Ei}{\upi{E}}
			\newcommand{\pii}{\upi{\pi}}
			\newcommand{\Bl}{\upl{B}}
			\newcommand{\sAl}{\upl{\sA}}
			\newcommand{\om}{\overline{m}}
			\newcommand{\on}{\overline{n}}
			\newcommand{\Yl}{Y^{\ld}}
			\newcommand{\bmul}{\overline{\mu}^{\ld}}
			\newcommand{\bnul}{\overline{\nu}^{\ld}}
			\newcommand{\mul}{\mu^{\ld}}
			\newcommand{\nul}{\nu^{\ld}}
			\newcommand{\pil}{\pi^{\ld}}
			\newcommand{\hQl}{\hat{\QQ}^{\ld}}
			\newcommand{\hql}{\hat{q}^{\ld}}
			\renewcommand{\Xi}{X^i}
			\newcommand{\Yil}{Y^{i,\ld}}
			\newcommand{\muil}{\mu^{i,\ld}}
			\newcommand{\nuil}{\nu^{i,\ld}}
			\newcommand{\muilt}{\mu^{i,\tld}}
			\newcommand{\nuilt}{\nu^{i,\tld}}
			\newcommand{\cme}{C_{\ref{lem: mu est}}}
			\newcommand{\eme}{\eps_{\ref{lem: mu est}}}
			\newcommand{\cle}{C_{\ref{lem:lambda est}}}
			\newcommand{\ch}{C_{\ref{lem: H}}}
			\renewcommand{\cd}{C_{\ref{lem: D}}}
			\newcommand{\clip}{C_{\ref{lem:67DB}}}
			\newcommand{\eed}{\eps_{\ref{lem: excess demand}}}
			\renewcommand{\tanu}{(\tnu^i)_i}
			\newcommand{\anul}{(\nuil)_i}
\title[Incomplete stochastic equilibria]{Incomplete stochastic equilibria for dynamic monetary utility}
\begin{document}

\author[]{Constantinos Kardaras}
\address{Constantinos Kardaras, Department of Statistics,
            London School of Economics and Political Science}
\email{k.kardaras@lse.ac.uk}

\thanks{\ \\[-2ex] \indent\emph{Acknowledgements:}
 The authors would like to thank the referees and AE for their suggestions which help us to improve the paper.
  The authors would like to thank Ulrich Horst, Sergio Pulido, Frank Riedel and Walter Tr\"ockel for valuable conversations. The third author acknowledges
  the support by the National Science Foundation under Grants No. DMS-0706947
  (2010 - 2015) and Grant No.~DMS-1107465 (2012 - 2017). Any opinions,
  findings and conclusions or recommendations expressed in this material are
  those of the author(s) and do not necessarily reflect the views of the
  National Science Foundation (NSF)}

\author[]{Hao Xing}
\address{Hao Xing, Department of Statistics,
            London School of Economics and Political Science}
\email{h.xing@lse.ac.uk}

\author[]{Gordan \v{Z}itkovi\'{c}}
\address{Gordan \v{Z}itkovi\'{c}, Department of Mathematics, University of Texas at Austin}
\email{gordanz@math.utexas.edu}

\date{\today}

\begin{abstract} We study existence and uniqueness of continuous-time
stochastic Radner equilibria in an incomplete market model among a group of agents whose preference is characterized by cash invariant time-consistent monetary utilities. An assumption
of ``smallness'' type is shown to be sufficient for existence and
uniqueness. In particular, this assumption encapsulates settings with small endowments, small time-horizon, or a large population of weakly heterogeneous agents.  
Central role in our analysis is played by a fully-coupled
nonlinear system of quadratic BSDEs.  \end{abstract}

\keywords{
backward stochastic differential equations,
general equilibrium,
incomplete markets,
Radner equilibrium,
systems of BSDE}

\maketitle

\section*{Introduction}

\subsection*{The equilibrium problem} The focus of the present paper is
the problem of
existence and uniqueness of a competitive (Radner) equilibrium in an
incomplete continuous-time stochastic model of a financial market. A
discrete version of our model was introduced by Radner in \cite{Rad82} as
an extension of the classical Arrow-Debreu framework, with the goal of
understanding how asset prices in financial (or any other) markets are
formed, under minimal assumption on the ingredients or the underlying
market structure. One of those assumptions is often market completeness;
more precisely, it is usually postulated that the range of various types of
transactions the markets allow is such that the wealth distribution among
agents, after all the trading is done, is Pareto optimal, i.e., that no
further redistribution of wealth can make one agent better off without
hurting  somebody else. Real markets are not complete; in fact, as it turns
out, the precise way in which completeness fails matters greatly for the
output and should be understood as an a-priori constraint.  Indeed, it is
instructive to ask the following questions: Why are markets incomplete in
the first place? Would rational economic agents not continue introducing
new assets into the market, as long as it is still useful? The answer is
that they, indeed, would, were it not for exogenously-imposed constraints
out there, no markets exist for most contingencies; those markets that do
exist are heavily regulated, transactions costs are imposed, short selling
is sometimes prohibited, liquidity effects render replication impossible,
etc. Instead of delving into the modeling issues regarding various types of
\emph{completeness constraints}, we point the reader to \cite{Zit12} where
a longer discussion of such issues can be found.

\subsection*{The ``fast-and-slow'' model} The particular setting we
subscribe to here is one of the simplest from the financial point of view.
It, nevertheless, exhibits many of the interesting features found in more
general incomplete structures and admits a straightforward continuous-time
formulation.  It corresponds essentially to the so-called ``fast-and-slow''
completeness constraint, introduced in \cite{Zit12}.

One of the ways in which the ``fast-and-slow'' completeness constraint can
be envisioned is by allowing for different speeds at which information of
two different kinds is incorporated and processed.  The discrete-time
version of the model is described in detail in \cite[p.~213]{MagQui96},
where it  goes under the heading of ``short-lived'' asset models.  Therein,
at each node in the event tree, the agents have access to a number of
short-lived assets, i.e., assets whose life-span ends in one unit of time,
at which time all the dividends are distributed.  The prices of such assets
are determined in the equilibrium, but their number is typically not
sufficient to guarantee local (and therefore global) completeness of the
market. In our, continuous time model, the underlying filtration is
generated by two independent Brownian motions ($B$ and $W$). Positioned
the ``node'' $(\omega,t)$, we think of $dB_t$ and $dW_t$ as two independent
symmetric random variables, realized at time $t+dt$,  with values $\pm
\sqrt {dt}$.  Allowing the agents to insure each other only with respect to
the risks contained in $dB$, we denote the (equilibrium) price of such an
"asset" by $-\ld_t\, dt$.  As already hinted to above, one possible
economic rationale behind this type of constraint is obtained by thinking
of $dB$ as the readily-available (fast) information, while $dW$ models
slower information which will be incorporated into the process $\ld_t$
indirectly, and only at later dates.  For simplicity, we also fix the spot
interest rate to $0$, allowing agents to transfer wealth from $t$ to $t+dt$
costlessly and profitlessly. Since in our setting consumption can occur only at terminal time, the interest rate can be taken exogenously. The normalization of zero interest rate is for expositional simplicity and is commonly used for model without intertemporal consumption, cf, eg. \cite{LV}.
%While, strictly speaking, this feature puts us in the partial-equilibrium framework, this fact will not play a role in our analysis, chiefly because our agents draw their utility only from the terminal wealth (which is converted to the consumption good at that point).

For mathematical convenience, and to be able to access the available
continuous-time results, we concatenate all short-lived assets with payoffs
$dB_t$ and prices $-\ld_t\, dt$ into a single asset $B^{\ld}_t = B_t +
\int_0^t \ld_u\, du$. It should not be thought of as an asset that carries
a dividend at time $T$, but only as a single-object representation of the
family of all infinitesimal, short-lived assets.

As a context for the  ''fast-and-slow'' constraint, we consider a finite
number $I$ of agents; we assume that their preference structure is characterized by a class of dynamic monetary utilities. The notion of dynamic monetary utility is closely related to dynamic risk measure, which is a time-consistent extension of the static risk measures introduced by Artzner et al. \cite{Artzner}; this time-consistency property is in line with the notion introduced by Koopmans \cite{Koopmans} and Duffie and Epstein \cite{Duffie-Epstein}.  Further information on dynamic risk measures can be found in \cite{CDK, CDK2, Barrieu-ElKaroui, DS, KM, Bion-Nadal, CK, Del-Peng-Gianin} among others. Dynamic monetary utility can also be characterized by $g$-expectations, i.e., solutions of a class of Backward Stochastic Differential Equations (BSDE), introduced by Peng \cite{Peng-g}. As was shown by Delbaen, Peng, and Rosazza Gianin \cite{Del-Peng-Gianin} that any dynamic monetary utility can be represented as a $g$-expectation.

In this paper, we consider a class of dynamic monetary utilities which are sandwiched between two entropic monetary utilities. The simplest example of this class is a group of exponential utilities with idiosyncratic risk-aversion parameters. The cash-invariant or monetary property of  the agents' utilities is absolutely crucial for all of our results as it induces a
``backward'' structure to our problem, which, while still very difficult to
analyze, allows us to make a significant step forward.

\subsection*{The representative-agent approach, and its failure in
incomplete markets} The classical and nearly ubiquitous approach to
existence of equilibria in complete markets is using the so-called
re\-pre\-sen\-ta\-tive-agent approach. Here, the agents' endowments are
first aggregated and then split in a Pareto-optimal way. Along the way, a
pricing measure is produced, and then, a-posteriori, a market is
constructed whose unique martingale measure is precisely that particular
pricing measure. As long as no completeness constraints are imposed, this
approach works extremely well, pretty much independently of the shape of
the agents' utility functions (see, e.g.,
\cite{DufHua85,Duf86,KarLakLehShr91,KarLehShr90,KarLehShr91,DanPon92,
AndRai08, Zit06} for a sample of continuous-time literature). A convenient
exposition of some of these and many other  results, together with a
thorough classical literature overview can be found in \cite[Chapter 4, Notes section]{KarShr98}.

The incomplete case requires a completely different approach and what were
once minute details, now become salient features.  The failure of
representative-agent methods under incompleteness are directly related to
the inability of the market to achieve Pareto optimality by wealth
redistribution. Indeed, when not every transaction can be implemented
through the market, one cannot reduce the search for the equilibrium to a
finite-dimensional ``manifold'' of Pareto-optimal allocations. Even more
dramatically, the whole nature of what is considered a solution to the
equilibrium problem changes. In the complete case, one simply needs to
identify a market-clearing valuation measure. In the present
``fast-and-slow'' formulation, the very family of all replicable claims (in
addition to the valuation measure) has to be determined. This significantly
impacts the ``dimensionality'' of the problem and calls for a different
toolbox.

\subsection*{Our probabilistic-analytic approach} The direction of the
present paper is partially similar to that of \cite{Zit12}, where a much
simpler model of the ``fast-and-slow'' type is introduced and considered.
Here, however, the setting is different and somewhat closer to \cite{Zha12}
and \cite{ChoLar14}. The fast  component is modeled by an independent
Brownian motion, instead of the one-jump process. Also, unlike in any of
the above papers, pure PDE techniques are largely replaced or supplemented
by probabilistic ones, and much stronger results are obtained.

Doing away with the Markovian assumption, we allow for a collection of
unbounded random variables, satisfying suitable integrability assumptions,
to act as random endowments and characterize the equilibrium as a
(functional of a) solution to a nonlinear system of quadratic BSDEs. Unlike single quadratic BSDE,
whose theory is by now quite complete (see e.g., \cite{Kobylanski,
BriHu06,BriHu08, DelHuBao10, BriEli13, Barrieu-El-Karoui} for a sample),
the systems of quadratic BSDEs are much less understood. The main
difficulty is that the comparison theorem may fail to hold for BSDE systems
(see \cite{Hu-Peng}).  Moreover, Frei and dos Reis (see
\cite{Frei-dosReis}) constructed a quadratic BSDE system which has bounded
terminal condition but admits no solution. The strongest general-purpose
result seems to be the one of Tevzadze (see \cite{Tevzadze}), which
guarantees existence under an ``$\linf$-smallness'' condition placed on the
terminal conditions.

Like in \cite{Tevzadze}, but unlike in \cite{Zit12} or \cite{ChoLar14}, our
general result imposes no regularity conditions on the agents' random
endowments. Unlike in \cite{Tevzadze}, we allow here for unbounded terminal conditions (random
endowments), and measure their size using an ``entropic'' $\BMO$-type norm
strictly weaker than the $\linf$-norm. Existence of equilibria is established when random endowments have small entropic-$\BMO$-norm. In addition, the equilibrium
constructed is unique in a global sense (as in \cite{Kramkov-Pulido}, where
a different quadratic BSDE system is studied).

One interesting feature of our general result is that it is 
independent of the number of agents (number of equations in the BSDE system). This is different from \cite{Tevzadze} and leads to the following
observation: the equilibrium exists as soon as ``sufficiently many
sufficiently homogeneous'' (under an appropriate notion of homogeneity)
agents share a given total endowment,  which is not assumed to be small.
This is precisely the natural context of a number of competitive
equilibrium models with a large number of small agents, none of whom has a
dominating sway over the price.

Another feature of our general result is its independence of is the time horizon. Indirectly, this leads to the fact that existence and uniqueness also holds when the time horizon is sufficiently small, but the random
endowments are not limited in size. Under the additional assumption of
Malliavin differentiabilty, a lower bound on how small the horizon has to
be to guarantee existence and uniqueness turns out to be inversely
proportional to the size of the (Malliavin) derivatives of random
endowments.  This extends \cite[Theorem 3.1]{ChoLar14} to a non-Markovian
setting. Interestingly, both the $\linf$-smallness of the random endowments
and the smallness of the time-horizon are implied by the
small-entropic-$\BMO$-norm condition mentioned above, and the existence
theorems under these conditions can be seen as special cases of our general
result.

\subsection*{Some notational conventions} As we will be dealing with
various classes of vector-valued random variables and stochastic processes,
we shall introduce sufficiently compact notation to make reading more
palatable.

A time horizon $T>0$ is fixed throughout. An equality sign between random
variables signals almost-sure equality, while one between two processes
signifies Lebesgue-almost everywhere, almost sure equality. Any two processes
that are equal in this sense will be identified; this, in particular, applies
to indistinguishable c\` adl\` ag processes. Let
$(\Omega,\sF_T,\FFF=\prf{\sF_t},\PP)$ be a filtrated probability space, whose filtration $\FFF$ is the augmented filtration generated by two independent Brownian motion $B, W$ and satisfies the usual conditions of completeness and right-continuity. $\sT$
denotes the set of all $[0,T]$-valued $\FFF$-stopping times, and $\sP^2$
denotes the set of all predictable processes $\prf{\mu_t}$ such that $\int_0^T
\mu_t^2\, dt<\infty$, a.s. The integral $\int_0^{\cdot} \mu_u\, d\hat{B}_u$ of
$\mu\in\sP^2$  with respect to an $\FFF$-Brownian motion $\hat{B}$ is alternatively
denoted by $\mu\cdot \hat{B}$, while the stochastic (Dol\' eans-Dade) exponential
retains the standard notation $\EN(\cdot)$. The $\lpee$-spaces, $p\in
[1,\infty]$ are all defined with respect to $(\Omega,\sF_T,\PP)$,
$\mathbb{L}^0$ denotes the set of ($\PP$-equivalence classes) of finite-valued
random variables on this space. For a continuous adapted process $\prf{Y_t}$,
we set \[ \norm{Y}_{\sinf} = \norm{\textstyle\sup_{t\in [0,T]}
\abs{Y_t}}_{\linf}, \] and denote  by $\sinf$ the space of all such $Y$ with
$\norm{Y}_{\sinf}<\infty$. For $p\geq 1$, the space of all
$\mu\in\sP^2$ with $\norm{\mu}_{H^p}^p = \EE\bra{\textstyle\int_0^T
\abs{\mu_u}^p\, du}<\infty$ is denoted by $H^p$, an alias for the Lebesgue
space $\lpee$ on the product $[0,T]\times \Omega$.

Given a probability measure $\hPP$ and a $\hPP$-martingale $M$, we define its
BMO-norm by \[ \norm{M}_{\BMO(\hPP)}^2= \sup_{\tau \in \sT}\Bnorm{
\heetau{ \ab{ M }_T - \ab{M}_{\tau}} }_{\linf}, \] where $\heetau{\cdot} \equiv \EE^{\hPP}[\cdot|\sF_{\tau}]$ denotes conditional expectation  with
respect to $\sF_{\tau}$, computed under $\hPP$.  The set of all
$\hPP$-martingales $M$ with $\norm{M}_{\BMO(\hPP)} < \infty$ is denoted by
$\BMO(\hPP)$, or, simply, $\BMO$, when $\hPP=\PP$. When applied to random
variables, $X\in\BMO(\hPP)$ means that $X=M_T$, for some $M\in\BMO(\hPP)$. In
the same vein, we define (for some, and then any, $(\hPP,\FFF)$-Brownian
motion $\hat{B}$) \[ \bmo(\hPP)= \sets{\mu\in \sP^2}{ \mu \cdot \hat{B} \in \BMO(\hPP)},\]
with the norm $\norm{\mu}_{\bmo(\hPP)} = \norm{\mu\cdot \hat{B}}_{\BMO(\hPP)}$. The
same convention as previously is used: the dependence on $\hPP$ is suppressed when
$\hPP=\PP$.

Many of our objects will take values in $\R^I$, for some fixed $I\in\N$.
Those are typically denoted by bold letters such as $\aE,
\amu,\anu,\aa$, etc. If specific components are needed, they will be given
a superscript - e.g., $\aE=(\Ei)_i$. Unquantified variables $i,j$ always
range over $\set{1,2,\dots, I}$. The topology of $\R^I$ is induced by the
Euclidean norm $\abst{\cdot}$, defined by $\abst{\ax}=\sqrt{\sum_i
\abs{x^i}^2}$ for $\ax = (x^i)_i \in \R^I$.  All standard operations and relations
(including the absolute value $\abs{\cdot}$ and order $\leq$) between
$\R^k$-valued variables are considered componentwise.

\section{Monetary utilities}\label{sec: utility}

\subsection{Relative entropy} \label{subsec:entropy}

We begin with a convention which extends the definition of expectation to $\mathbb{L}^0$. For any random variable $G \in \mathbb{L}^0$ and probability measure $\QQ \sim \PP$, set 
\[
\EE^{\QQ}[G] := \ \downarrow \lim_{n\rightarrow \infty} \EE^{\QQ}[G \vee (-n)] = \left\{\begin{array}{ll} \infty, & \text{if } \EE^{\QQ}[G_+] =\infty, \\ \EE^{\QQ}[G_+] - \EE^{\QQ}[G_-], & \text{otherwise} \end{array}\right.,
\]
where, as usual, $G_+ = \max\{G, 0\}$ and $G_- = \max\{-G, 0\}$. Furthermore, define
\[
H(\QQ|\PP) = \EE^{\PP} \left[ \frac{d\QQ}{d\PP}\log \frac{d\QQ}{d\PP} \right], \quad \QQ \sim \PP,
\]
to be the relative entropy of $\QQ$ with respect to $\PP$, and define also the pair $(p(\QQ),q(\QQ))$ of predictable processes implicitly via the density $d\QQ / d\PP = \mathcal{E}(- p(\QQ) \cdot B - q(\QQ) \cdot W)_T$. Finally, set
\[
\mathcal{Q} = \left\{ \QQ \sim \PP \ \Big| \  \frac{d\QQ}{d\PP} \in \bigcup_{p>1}
\mathbb{L}^p(\PP) \right\}.
\]
The following lemma is similar to a standard result from the
literature, but requires a separate proof due to our use of the nonstandard
dual domain $\sQ$.

\begin{lem} \label{lem:CE-Q}
It holds that
\[
H(\QQ | \PP) = \frac{1}{2} \EE^{\QQ} \left[  \int_0^T (p^2_u(\QQ) + q^2_u(\QQ) ) du \right] < \infty, \quad \QQ \in \mathcal{Q}.
\]	
Furthermore, we have	
\begin{equation}\label{eq: exp CE}
	-\delta \log \EE^{\PP}[e^{-G/\delta}] = \inf_{\QQ\in \mathcal{Q}}
	\big\{\EE^{\QQ}[G] + \delta H(\QQ|\PP)\big\}, \quad \forall \delta > 0, \  G \in \mathbb{L}^{0}.
\end{equation}
\end{lem}

\subsection{Definition and properties}

In the sequel, we shall consider a random field $f: \Omega \times [0, T] \times \R^2 \rightarrow \R_+$ with the following properties.

\begin{ass}\label{ass:f}
The function $f: \Omega \times [0, T] \times \R^2 \rightarrow \R_+$ is such that:
\begin{itemize}
	\item for all $(p,q)\in \R^2$, $f(\cdot, \cdot, p, q)$ is a predictable process;
	\item for all $(\omega, t) \in \Omega \times [0, T]$, $f(\omega, t, 0, 0) = 0$, $f(\omega, t, \cdot, \cdot)$ is $C^2(\mathbb{R}^2)$ with gradient at $(0,0) \in \bR^2$ satisfying $D f(\omega, t, 0,0) = (0,0)$, and there exist constants $0 < \delta \leq \Delta<\infty$  such that 
	both eigenvalues of the
	Hessian $D^2 f (\omega, t, \cdot, \cdot)$ belong to $[\delta,\Delta]$.
\end{itemize}
\end{ass}

In the sequel, and in order to simplify notation, in random fields like $f$, whenever we want to stress dependence of $(p,q) \in \R^2$, we write $f_{\omega, t} (p, q)$ instead of $f(\omega, t, p, q)$, or even $f (p, q)$ when $(\omega, t) \in \Omega \times [0, T]$ is fixed. Furthermore, and as typical in BSDE theory, we shall frequently omit the argument $\omega$, especially in the context where $f$ has to be evaluated at predictable processes $(p_t, q_t)_{t \in [0, T]}$, where we shall simply write $f_t(p_t, q_t)$.

Any $f$ satisfying Assumption \ref{ass:f} is such that $f_{\omega, t}(\cdot, \cdot)$ is clearly nonnegative and strictly convex for all $(\omega, t) \in \Omega \times [0, T]$. Taylor's theorem implies that 
\begin{align}
\label{equ:sand} 
\tot \delta (p^2+q^2) \leq f_{\omega, t} (p,q) \leq \tot
\Delta (p^2+q^2),\text{ for all } (\omega, t, p, q) \in \Omega \times [0,T] \times \R^2.
\end{align}

Whenever $f$ satisfies Assumption \ref{ass:f}, Lemma \ref{lem:CE-Q} implies that
\[
\EE^{\QQ}\Big[\int_0^T f_u(p_u(\QQ), q_u(\QQ) ) du\Big] \leq \frac{\Delta}{2} H(\QQ | \PP) < \infty, \quad \QQ \in \sQ.
\]
Therefore, and recalling the conventions regarding expectations in \S \ref{subsec:entropy}, one may define a mapping $U : \mathbb{L}^0 \mapsto [- \infty, \infty]$ via
\begin{equation}\label{eq: UE}
U(G) = \inf_{\QQ\in\sQ} \EE^{\QQ}\Big[G + \int_0^T f_u(p_u(\QQ), q_u(\QQ)) du\Big],
\end{equation}
The thus-defined functional $U$ is called a {\define{monetary utility function}}, and $f$ the \define{penalty function} associated to it. Using \eqref{eq: exp CE} and \eqref{equ:sand}, we obtain entropic
upper and lower bounds for $U$, namely
\begin{equation}\label{equ: U-entr-ineq}
-\delta \log \EE^{\PP}[e^{-G/\delta}] \leq U(G) \leq -\Delta \log
\EE^{\PP}[e^{-G/\Delta}], \quad G \in \el^0.
\end{equation}
In particular, $U(G) < \infty$ holds for all $G \in \el^0$.

It follows in a straightforward way from the above that the following properties are
valid, where $G \in \el^0$, $G' \in \el^0$, and $\seq{G}$ is a nonincreasing sequence in $\el^0$:
 \begin{itemize}
	  \item \textit{Positivity:} $U(0) =0$, and $U(G)\leq U(G')$, for $G \leq G'$,
	  \item \textit{Concavity:} $U(\alpha G + (1-\alpha) G') \geq \alpha U(G) +
  (1-\alpha) U(G')$ for all $\alpha \in [0,1]$.
  \item \textit{Monetary invariance:} $U(G+ a) = U(G) + a$, for all $a\in \mathbb{R}$.
 \item \textit{Fatou property:} Whenever $G_n \downarrow G \in \el^0$ and $\sup_{\mathbb{Q}\in \sQ}\EE^{\QQ}[G_1]<\infty$,  we have
  \[ U(G) = \, \downarrow\lim_{n\rightarrow \infty}
  U(G_n).\]
 \end{itemize}

\begin{exa} % \label{exa:exponential}
The simplest---but far from the only---example of a monetary utility as described above is when the penalty function $f$ satisfies
\[
f(\omega, t, p,q ) = \frac{\eta (\omega, t)}{2}(p^2 + q^2), \quad (\omega, t, p,q ) \in \Omega \times [0, T] \times \R^2,
\]
where $\eta$ is a predictable process such that $\delta \leq \eta \leq \Delta$ holds for constants $0 < \delta \leq \Delta < \infty$. Here, $\eta(\omega, t)$ may be loosely interpreted as a state-time dependent, on $(\omega, t) \in \Omega \times [0, T]$, risk tolerance coefficient. For constant $\eta$, Lemma \ref{lem:CE-Q} implies that $U$ is entropic utility.
\end{exa}

\section{Single-Agent Maximization Problem} \label{sec: maximization}

\subsection{The financial market} \label{subsec:market}
Our model of
a financial market features one liquidly traded \define{risky asset}, whose
value, denoted in terms of a prespecified num\'{e}raire which we normalize
to $1$,  is given by
\begin{equation}\label{eq: B lam}
 d\Bl_t = \lambda_t \,dt + dB_t, \quad t\in [0,T],
\end{equation}
for some $\lambda \in \bmo$.
Given that it will play a role of a ``free
parameter'' in our analysis, the volatility in \eqref{eq: B lam} is
normalized to $1$; this way, $\lambda$ can simultaneously be interpreted as
the \define{market price of risk}. For technical reasons explained below,
it will be enough to assume for our purposes that
$\lambda \in \bmo$.   The reader should consult the subsection
`The ``fast-and-slow'' model' in the introduction for the proper economic
interpretation of this asset as a concatenation of a continuum of
infinitesimally-short-lived securities.

\subsection{The entropic BMO space}

In order to describe the appropriate regularity class for the agents'
random endowments, which will be larger than $\linf$, we shall need the following space, described via solvability of a certain quadratic BSDE:
\begin{defn}[Entropic BMO]
\label{def:1F98}
A random variable $G \in\lzer$ is said to belong to the
\define{entropic BMO space} $\EBMO$
if there exist (necessarily unique) processes
$(m^{G},n^{G})\in\bmo^2$ and a constant $X^G_0$ such that $ X^G_T = G$ where
 \begin{equation}
 \label{equ:EBMO-BSDE}
  X^G_t = X_0^G + \int_0^t m^G_u\ dB_u + \int_0^t n^G_u\,
    dW_u +  \frac{1}{2} \int_0^t
   \Big((m^G_u)^2+(n^G_u)^2\Big)\, du.
   \end{equation}
\end{defn}

%\medskip

\noindent An exponentiation of the negative of both sides of \eqref{equ:EBMO-BSDE} yields
\begin{align}
\label{equ:def-ME}
\EN(-M^G)_T = e^{-G} \quad \ewhere M^G= X^G_0 + m^G\cdot B + n^G\cdot W \in \BMO,
\end{align}
meaning that $G \in \EBMO$ if and only if $e^{-G}$ is the last element of a
stochastic exponential of a $\BMO$ martingale. Less formally, $\EBMO = -
\log \EN( \BMO)$. Characterization and properties of $\EBMO$ are presented separately in Appendix \ref{app: EBMO}.

For $G \in \EBMO$ we define the following seminorm-like quantity which, in
an abuse of terminology, we still call an \define{EBMO semi-norm}: 
\[ \norm{G}_{\EBMO} := \norm{M^G}_{\BMO}=\norm{(m^G,n^G)}_{\bmo^2}.\]
Since $\norm{\cdot}_{\EBMO}$ lacks the homogeneity property, 
we also introduce the following family:
\[ \norm{G}_{\EBMO,\delta} = \delta \norm{G/\delta}_{\EBMO}, \text{ for }
\delta>0,\]
and note that $G/\delta \in \EBMO$ if and only if the 
equation
 \begin{equation}
 \label{equ:EBMO-BSDE-del}
    X^{G,\delta}_t = X_0^{G,\delta}+ \int_0^t
    m^{G,\delta}_u\ dB_u + \int_0^t n^{G,\delta}_u\,
    dW_u + \tfrac{1}{2 \delta} \int_0^t
   \Big((m^{G,\delta}_u)^2+(n^{G,\delta}_u)^2\Big)\, du,
   \end{equation}
   with $X^{G,\delta}_T=G$,
   admits a (necessarily unique) solution with $(n^{G,\delta},
   m^{G,\delta})\in\bmo^2$. In that case we necessarily have
$X^{G,\delta} = 
\delta X^{G/\delta}$ and $(m^{G,\delta},n^{G,\delta})= \delta
(m^{G/\delta},n^{G/\delta})$, 
so that
   \[ \norm{G}_{\EBMO,\delta} =
   \norm{(m^{G,\delta},n^{G,\delta})}_{\bmo}.\]

\subsection{Agent's utility-maximization problem}

In the market model of \S \ref{subsec:market}, we consider a single \define{economic agent} who trades the risky asset as well as the aforementioned riskless, num\' eraire, asset of constant value 1.  The agent's preferences are modeled by a monetary utility associated to a
penalty function $f$ satisfying Assumption \ref{ass:f}. This agent receives a \define{random endowment} $E \in \el^0$ at time $T$; we shall assume throughout that $E_+ \in \bigcap_{p > 1} \el^p (\PP)$, and $E/\delta  \in \EBMO$.

The agent maximizes the expected utility at the terminal time $T$ arising from trading and random endowment:
\begin{equation}\label{eq: op}
 U(\pi \cdot \Bl_T + E) \rightarrow \max,
\end{equation}
where the portfolio process $\prf{\pi_t}$ represents the number
of shares of the asset kept by the agent, and belongs to an admissible class
described below. As usual, this strategy is financed by investing in or borrowing from the interest-free num\' eraire asset, as needed. To our best knowledge, solution to \eqref{eq: op} for dynamic monetary utility $U$ was missing from the literature. Proposition \ref{prop: verification} below establishes the existence and uniqueness of the optimal portfolio process $\prf{\pi^\lambda_t}$.

For $\ld\in\bmo$, we denote by $\mathcal{M}^\lambda$ the subset of $\sQ$ that consists of equivalent local martingale measures for $B^\lambda$. More precisely, and in view of Levy's characterization theorem, $\mathcal{M}^\lambda$ consists of all probability measures in $\sQ$ under which $B^\lambda$ becomes a Brownian motion.
We note that, since $\lambda\in \bmo$, reverse H\" older inequalities
hold for $\EN(-\ld\cdot B)$ (cf. \cite[Theorem 3.4.]{Kazamaki}) and,
consequently,  the
minimal martingale measure
$\QQ^{\ld}$,  given by $ d\QQ^\lambda / d\PP = \EN(-\ld\cdot B)_T$,
belongs to $\sM^{\ld}$. Note also that any $\QQ \in \mathcal{M}^\lambda$ is such that $d\QQ^\lambda / d\PP = \EN(-\ld\cdot B - q \cdot W)_T$, for appropriate $q \equiv q(\QQ) \in \mathcal{P}^2$.

A strategy $\pi$ is said to be \define{$\ld$-admissible} if $\pi\in \sAl$,
where 
 \[  \mathcal{A}^\lambda = \left\{\pi \in
\mathcal{P}^2\, |\, \pi \cdot B^\lambda \text{ is a } \QQ\text{-supermartingale
for all } \QQ\in \mathcal{M}^\lambda\right\}. \]
 For each $\pi\in\bmo$ and $\QQ\in \mathcal{M}^\lambda$, $\pi\cdot B^{\ld}$ is a $\QQ$-martingale\footnote{For each $\pi\in \bmo$, energy inequalities in \cite[Page 26]{Kazamaki} imply that $\pi \in H^p$ for every $p\geq 1$. This fact combined with $\pare{d\QQ / d\PP} \in \bigcup_{p>1} \mathbb{L}^p$ and H\"{o}lder's inequality imply that $\pi\in H^2(\QQ)$, for each $\QQ\in \mathcal{M}^\lambda$. Therefore $\pi \cdot B^\lambda$ is a $\QQ$-martingale.}; therefore, $\bmo\subseteq \sA^{\ld}$.

The maximization problem in 
\eqref{eq: op}, posed over $\pi\in \sA^{\ld}$, is called the \define{primal
problem}.  The definitions of $U$ and $\sA^{\ld}$ yields the following weak-duality bound
\begin{equation}\label{eq: dual ineq}
 \sup_{\pi \in \mathcal{A}^\lambda} U(\pi\cdot B^\lambda_T + E) \leq
 \inf_{\QQ\in\sM^{\ld}}  \EE^{\QQ}\Big[E + \int_0^T f_u(\lambda_u, q_u (\QQ))
 du\Big],
\end{equation}
with the minimization problem on the right-hand side is called the
\define{dual problem}. 
We remark that the expectation in the definition of the dual
problem exists in $(-\infty,\infty]$, thanks to Proposition
\ref{prop:ebmo} item (2) 
and the $\lpee$-integrability requirement in the
definition of $\sM^{\ld}$.

Our next result characterizes the value of the dual problem via a BSDE.
Given the market price of risk $\lambda \in \bmo$, $f$ satisfying Assumption \ref{ass:f}, and a random endowment $E \in \el^0$ such that $E_+ \in \bigcap_{p > 1} \el^p (\PP)$ and $E/\delta  \in \EBMO$, define the process
  \begin{align}
  \label{equ:def-Yl}
  \Yl_t = \essinf\Big\{\EE^{\QQ}_t \Big[E + \int_t^T f_u(\lambda_u, q_u(\QQ)) du\Big] \, \Big| \, \QQ\in \mathcal{M}^\lambda \Big\}, \quad t\in[0,T].
  \end{align}
%We remind the reader that the process $X^{G}$, for $G\in \EBMO$ is given by
%\eqref{equ:EBMO-BSDE}, that $\alpha G\in \EBMO$ as soon as $G\in \EBMO$
%and $\alpha\leq 1$, and that $\delta\leq \Delta$ are the constants from
%Assumption \ref{ass:f}.
Before characterizing $\Yl$, we introduce the partial conjugate $h : \Omega \times [0, T] \times \R^2 \mapsto \R$ in the second spatial argument of $f$:
\begin{equation} \label{equ:h-def}
h_{\omega, t}(p,\nu) = \sup_{q\in\R} \Big( q\nu - f_{\omega, t} (p,q) \Big),\quad (\omega, t) \in \Omega \times [0, T], \ (p,\nu)\in\R^2,
\end{equation}
  and gather some of its properties in the following Lemma:
  \begin{lem}
  \label{lem:h}
  Under Assumption \ref{ass:f}, the partial convex conjugate $h$
  of $f$, given by \eqref{equ:h-def} above has the following properties for all $(\omega, t) \in \Omega \times [0,T]$, whose dependence is hidden below, and
  $(p,\nu)\in\R^2$, where all constants depend only on $\delta$ and
  $\Delta$ of Assumption \ref{ass:f}:
  \begin{enumerate}
    \item
$h(\cdot, \cdot)$ is concave in the first argument and convex in the second, and satisfies  
   \[ -\tfrac{\Delta}{2} p^2 + \tfrac{1}{2\Delta} \nu^2 \leq h (p,
   \nu) \leq -\tfrac{\delta}{2}p^2 + \tfrac{1}{2\delta}\nu^2.
   \]
    \item $h (\cdot, \cdot) \in C^2(\R^2)$, $h (0, 0) =0$, $D h (0,0)=(0,0)$ and there exist
    constants $\gamma>0$ and $\Gamma > 0$, 
    such that
    \[ \partial_{11} h (p,\nu) \leq - \gamma \quad \eand \quad \abs{\partial_{jk} h (p,\nu)}
    \leq \Gamma, \quad\text{ 
    for  $j,k\in {1,2}$.}\]
    \item There exists a constant $\Theta>0$ such that, for all $p,\tp,
    \nu, \tilde{\nu}\in\R$, we have
 \begin{equation}
 \label{equ:der-h-bds}
    \abs{\partial_1 h (p,\nu)} + \abs{\partial_2 h (p,\nu)} \leq \Theta
   \big( \abs{p}+\abs{\nu} \big),
 \end{equation}
and
     \begin{equation}
     \label{equ:h-lip}
        \abs{h_t(p,\nu) - h_t(\tilde{p},\tilde{\nu})} \leq \Theta \big(
       \abs{p} \vee \abs{\tilde{p}} + \abs{\nu}\vee \abs{\tilde{\nu}} \big) \Big(
       \abs{p-\tilde{p}} + \abs{\nu-\tilde{\nu}} \Big).
     \end{equation}
  \item With $\gamma$ as in (2) above, we have 
  \[ h (p,\nu) - p \partial_1 h (p,\nu) \geq \tot \gamma p^2.\]
  \end{enumerate}
  \end{lem}

Now we characterize $\Yl$ via a BSDE in the following result:

\begin{prop}\label{prop: Y BSDE}
Let $\ld\in\bmo$, $f$ satisfying Assumption \ref{ass:f}, and $E \in \el^0$ be such that $E_+ \in \bigcap_{p > 1} \el^p (\PP)$ and $E/\delta  \in \EBMO$. Then, the process $\Yl$ admits a continuous modification and has the following properties:
\begin{enumerate}
  \item[(i)] 
  $X^{E,\delta}_t \leq \Yl_t \leq  \EE^{\QQ^\lambda}_t[E_+] + \tfrac12 \Delta\|\lambda\|^2_{\bmo(\QQ^\lambda)} < \infty$, for all
  $t\in [0,T]$, where $X^{E,\delta}$ is given as in \eqref{equ:EBMO-BSDE-del}; 
  \item[(ii)] $\Yl$ is the unique solution to the BSDE
  \begin{equation}\label{eq: BSDE Y}
   dY_t = \big(h_t(\lambda_t, \nu_t) + \lambda_t \mu_t\big)\, dt + \mu_t\,
   dB_t + \nu_t\, dW_t, \quad Y_T = E,
  \end{equation}
  with $(\mu, \nu)\in \bmo^2$, where $h$ is given by \eqref{equ:h-def}. 
 \end{enumerate}
\end{prop}

Our next task is to identify the optimal investment strategy for the primal problem, using the solution of the dual.

\begin{prop}\label{prop: verification}
Let $\ld\in\bmo$, $f$ satisfying Assumption \ref{ass:f}, and $E \in \mathbb{L}^0$ be such that $E_+ \in \bigcap_{p > 1} \el^p (\PP)$ and $E/\delta  \in \EBMO$. Furthermore, let $(\mul, \nul)$ be the processes featuring  in the martingale component of the (unique) solution $\Yl$ to \eqref{eq: BSDE Y}. Then, the process
\begin{equation}\label{eq: op pi}
\pil = - \partial_1 h(\lambda, \nul) - \mul.
\end{equation}
belongs to $\bmo$ and is the unique optimal investment strategy for the primal problem \eqref{eq: op}.
\end{prop}

\section{Equilibrium}

\subsection{Equilibrium}
We consider a finite number $I\in \mathbb{N}$ of economic agents.
Their preferences are modelled by monetary utilities with penalty functions $(f^i)_i$, and receive random endowments $(E^i)_i$. We impose the following
assumption.

\begin{ass} \label{ass:eq}
For each $i$, $f^i$ satisfies Assumption \ref{ass:f}, with constants $\delta_i \leq \Delta_i$, and $E^i \in \el^0$ is such that $E^i_+ \in \bigcap_{p > 1} \el^p (\PP)$ and $E^i/{\delta_i}  \in \EBMO$.
\end{ass}
In the context of Assumption \ref{ass:eq}, we set
\begin{equation} \label{equ:dD}
   \delta = \min_i \delta_i, \qquad \Delta = \max_i \Delta_i,
\end{equation}
and introduce the shortcuts
$X^i = X^{E^i,\delta_i}$ and $(m^i, n^i)=(m^{E^i,\delta_i},
n^{E^i,\delta_i})\in \bmo$, so that 
\begin{equation}\label{equ:Xis}
 dX^i_t = m^i_t dB_t + n^i_t dW_t + \tfrac{1}{2\delta_i} \big((m^i_t)^2 + (n^i_t)^2\big) dt, \quad X^i_T = E^i.
 \end{equation}

The pair $(\aE,\af)$, where $\aE=(\Ei)_i$, $\af=(f^i)_i$, of
endowments and penalty functions fully characterizes the behavior
of the agents in the model; we call it the \define{population
characteristics}---$\aE$ is the \define{initial allocation} and $\af$
the \define{risk profile}.  
Given a market price of risk process $\ld$, each agent maximizes the expected utility of trading and random endowment 
in the incomplete financial market
of \eqref{eq: op}.

\begin{defn}[Equilibrium] \label{def:equilibrium}
For a population with
characteristics $(\aE,\af)$, a process $\ld\in\bmo$ is called an \define{equilibrium (market price of
risk)} if there exists an $I$-tuple $(\pii)_i$ such that
 \begin{enumerate}
  \item[i)] each $\pii$ is an \emph{optimal strategy} for the agent
  $i$ under $\ld$, i.e.
      \[
       \pii \in  \text{argmax}_{\pi \in \sAl}
       \ee{U^i(\pi\cdot \Bl_T+\Ei)},
      \]
  \item[ii)] the \emph{market clears}, i.e.,
   $\sum_{i} \pii=0$.
 \end{enumerate}
The set of all  equilibria is denoted by $\Ld(\aE,\af)$.
\end{defn}

\begin{rem}
While it is conceivable that an equilibrium market price of risk $\ld$ may exist
outside $\bmo$, we restrict our attention only to the latter class. It is a natural
ambient space, given our assumptions on $\aE$. Moreover, when $\ld\not\in\bmo$ 
there are no known workable
conditions which guarantee the existence of optimal strategies for our 
agents. Therefore, we include the condition $\ld\in\bmo$
in the very definition of an equilibrium, and make all our uniqueness
statements with respect to this class, only. 
\end{rem}

\subsection{A BSDE characterization of equilibria}
The BSDE-based description in Propositions \ref{prop: Y BSDE} and
\ref{prop: verification} of the solution of a single agent's optimization
problem is the main ingredient in the following characterization.
 
\begin{thm}[BSDE characterization of equilibria]\label{thm:char}
Given $\ld\in\bmo$, and population characteristics $(\aE,\af)$ which
satisfy Assumption \ref{ass:eq}, 
the following are
equivalent:
\begin{enumerate}
\item $\ld\in\Ld(\aE,\af)$, i.e.,
$\ld$ is an equilibrium for the population $(\aE,\af)$.
\item
$\ld$ and some processes $(\Yil,  \mu^i, \nu^i)_i$, with each $(\mu^i, \nu^i) \in \bmo^2$, satisfy the following BSDE system:
\begin{equation}
\label{equ:system}
\left\{
\begin{split}
 &d\Yil_t = \big(h^i_t(\lambda_t, \nu^i_t) + \lambda_t \mu^i_t\big) dt + \mu^i_t dB_t + \nu^i_t dW_t, \quad \Yil_T = E^i,\, i=1, \dots, I,\\
 & \textstyle \sum_i \partial_1 h^i(\lambda, \nu^i) = - \sum_i \mu^i.
\end{split}\right.
\end{equation}
\end{enumerate}
\end{thm}

\begin{rem}\ 
\begin{enumerate}
  \item Given the results of Lemma \ref{lem:h}, under the conditions imposed on the drivers $f^i$, the system in \eqref{equ:system} is a genuine system of
  BSDE. Indeed, under Assumption \ref{ass:eq},  each
  $h^i$ is a strictly concave in the first variable, 
  for each value of the second variable. This way, 
    the condition $\partial_1 h^i(\lambda, \nu^i) = -  \sum_i \mu^i$ 
    can be rewritten as 
  \[ \ld = H^{-1} \left( -\oo{I}  \sum_i \mu^i ; \, (\nu^i)_i \right) \ewhere 
 H(p; \,  (\nu_i)_i) = \tfrac{1}{I} \sum_{i=1}^I \partial_1 h^i(p,
 \nu^i),
 \]
where $H^{-1}$ denotes inverse in the first spatial argument. This expression for $\ld$ substituted into the first $I$ equations in
 \eqref{equ:system}, yielding a fully coupled system of BSDE with a quadratic
 driver.
\item While quite meaningless from the competitive point of view, the case $I= 1$
in the above characterization still admits a meaningful interpretation. The
notion of an equilibrium here corresponds to the choice of $\ld$ under which
an agent, with random endowment $E\in\EBMO$ would
choose not to invest in the market at all. The system \eqref{equ:system} reduces to a
single equation \[ dY_t = \mu_t dB_t + \nu_t dW_t + g_t(\mu_t, \nu_t) dt, \quad Y_T=E,
\]
where 
\[
 g_{\omega, t}(\mu, \nu) = \sup_{p\in \R} \big(\mu p + h_{\omega, t}(p,\nu)\big), \quad (\omega, t, \mu, \nu) \in \Omega\times [0,T]\times \R^2,
\]
is the convex conjugate of $f$ and satisfies
\[
 \tfrac{1}{2\Delta} (\mu^2 + \nu^2) \leq g_{\omega, t}(\mu, \nu) \leq \tfrac{1}{2\delta} (\mu^2 + \nu^2), \quad (\omega, t, \mu, \nu) \in \Omega\times [0,T]\times \R^2.
\]
Since $E/\delta\in \EBMO$, Proposition \ref{prop:ebmo} item (1) implies $E/\Delta\in \EBMO$ as well. 
Therefore the previous BSDE admits a unique solution,
highlighting the role of $\EBMO$ as the natural space in the context of
stochastic equilibria with monetary utilities.
\end{enumerate}
\end{rem}

\subsection{Existence and uniqueness}
Now follows our main result.

\begin{thm}[Existence and uniqueness of equilibrium]
\label{thm: main}
 Suppose that the population characteristics $(\aE,\af)$ satisfy Assumption
 \ref{ass:eq}.
For $\delta$ and $\Delta$ given by \eqref{equ:dD},   there exists a
 constant $M = M(\delta,\Delta)>0$ 
 such that whenever
 \begin{equation}\label{equ:mn-bmo}
  \norm{E^i}_{\EBMO,\delta_i} \leq M, \quad \text{ for each $i$,}
 \end{equation}
 there exists a unique equilibrium $\lambda\in \bmo$. 
 Moreover, the triplet $(\aY, \amu, \anu)$, whose components are 
 defined in Proposition \ref{prop: Y BSDE}, is the unique solution 
 to \eqref{equ:system} with $(\amu, \anu)\in \bmo^{2I}$.
\end{thm}
\begin{rem}\
\begin{enumerate}
\item[(1)] The requirement $\norm{E^i}_{\EBMO,\delta_i}\leq M$ can be
fulfilled in several ways. The most important ones are:
\begin{enumerate}
\item By Proposition \ref{prop:ebmo}, $\norm{E^i}_{\EBMO,\delta_i} = \delta_i \norm{E^i / \delta_i}_{\EBMO} \leq 2 \sqrt{\delta_i \norm{E^i}_{\linf}}$. Therefore, ``smallness'' in EBMO is implied by ``smallness'' in $\linf$ of the random endowment $E^i$.
\item By Proposition \ref{prop:suf-ebmo}, when $E^i$ is Malliavin differentiable with bounded Malliavin derivatives (i.e., Malliavin-Lipschitz in the terminology of Appendix \ref{app: EBMO}), its EBMO norm is controlled by $L \sqrt{T}$, where
$L$ is the Malliavin-Lipschitz constant of $E^i$ and $T$ is the time-horizon.
Therefore, our result guarantees the existence of equilibria even when $E^i$ are
unbounded if either the time-horizon or their Malliavin-Lipschitz constants
are small enough. A similar ``smallness in time-horizon"
result has been proven in \cite[Theorem 3.1]{ChoLar14} (and in \cite{Zit06} in
a simpler model) in a
Markovian setting.
\end{enumerate}
 \item[(2)] The constant $M$ in condition \eqref{equ:mn-bmo}, does not depend on the number of
 agents $I$. This is in contrast to ``smallness"-type result of Tevzadze
 (see \cite[Proposition 1]{Tevzadze}) whose condition depends on the number
 of equations in the system. This feature will be important in Corollary
 \ref{cor: homo} later.
 \item[(3)] The uniqueness statement in Theorem \ref{thm: main} is a global
 one, in contrast to the usual local uniqueness in a ball of $\bmo$ which
 follows directly from Banach's fixed point theorem, see e.g. \cite[Proposition 1]{Tevzadze}. A similar
 global uniqueness has been obtained in
\cite[Theorem 4.1]{Kramkov-Pulido} for a different quadratic BSDE
system arising from a price impact model.
\end{enumerate}
\end{rem}

\begin{rem}[Exponential utilities]
When Theorem \ref{thm: main} is specialized to the case of entropic utilities with heterogeneous risk tolerances $(\delta^i)_i \in (0, \infty)^I$, i.e., when
\[
f^i(p,q) = \frac{\delta^i}{2}(p^2 + q^2), \quad i=1, \dots, I,
\]
two additional remarks can be made:
\begin{enumerate}
 \item[(1)] A collection of feasible allocation $\aE$ is Pareto optimal if and
 only if all $E^i/\delta^i$ agree up to constants, i.e., there exists
 $E^c\in \mathbb{L}^0$ and constants $(c^i)_i$ such that
 $E^i/\delta^i = E^c +c^i$ for all $i$. When $E^c_+ \in \cap_{p>1} \mathbb{L}^p(\PP)$ and $E^c \in \EBMO$, statement of
 Theorem \ref{thm: main} still holds when condition \eqref{equ:mn-bmo} is
 translated closer to some Pareto optimal allocation, i.e., 
 \[
 \max_i \|(m^i-  m^c, n^i- n^c)\|_{\bmo(\PP^c)}\leq r,
 \]
 where $\tfrac{d\PP^c}{d\PP} = \tfrac{\exp(-E^c)}{\EE[\exp(-E^c)]}=\mathcal{E}(-\int m^c_u dB_u - \int n^c_u dW_u)_T$
 and $(m^i,n^i)$ are as in \eqref{equ:Xis}.
 \item[(2)] In a Markovian setting where $\aE= \ag(X_T)$ for bounded and
 H\"{o}lder continuous $\ag$, and a diffusion $X$ driven by $B$ and $W$,
 \cite[Theorem 3.1]{Xing-Zitkovic} proves the global existence and
 uniqueness of equilibrium. This result is obtained using an  analytic
 approach, and is only applicable in the Markovian setting.
\end{enumerate}
\end{rem}

\subsection{An economic implication of Theorem \ref{thm: main}}

A novel and interesting feature of \eqref {equ:mn-bmo} is its lack of
dependence on the number of agents $I$; this has profound economic effects
and leads to the existence of
equilibria in an economically meaningful
asymptotic regime with ``large" number of agents.
Given a \define{total endowment} $E_\Sigma\in \mathbb{L}^\infty$ to be shared
among $I$ agents, i.e., $\sum_i E^i = E_\Sigma$, one can ask the following
question: how many and what kind of agents need to share this total endowment
so that they can form a financial market in which an equilibrium exists?
The answer
turns out to be ``sufficiently many
sufficiently homogeneous agents''. In order show that, we first make precise
what
we mean by sufficiently homogeneous. For the population characteristics
$\aE=(\Ei)_i$  and  $\af=(f^i)_i$, with $\aE\in (\linf)^I$ and $\af$
satisfying Assumption \ref{ass:eq}, we define the
\define{endowment heterogeneity index} $\chi^E(\aE) \in [0, 1]$
by
\[
\chi^E(\aE) = \max_{i,j} \frac{\norm{E^i-E^j}_{\linf}}{\norm{E^i}_{\linf} + \norm{E^j}_{\linf}}.
\]
We think of a population of agents as ``sufficiently homogeneous'' if $\chi^E
(\aE)\leq \chi^E_0$ for some, given, critical index $\chi^E_0$. With this in
mind, we have the following corollary of Theorem \ref{thm: main}:

\begin{cor}[Existence of equilibria for sufficiently many sufficiently
homogeneous agents]\label{cor: homo} Given a critical endowment homogeneity
index $\chi^{E}_0 \in [0,\tot)$ and total endowment
$E_{\Sigma}\in\linf$, there exists a constant $I_0 = I_0(
\norm{E_{\Sigma}}_{\linf}, \chi^E_0, \delta, \Delta)\in\N$, so that any
population $(\aE,\af)=(\Ei,f^i)_i$ satisfying Assumption \ref{ass:eq} and \[ I\geq I_0,\quad
\textstyle\sum_i E^i = E_{\Sigma},\quad \text{ and } \quad
\chi^{E}(\aE)\leq \chi^{E}_0,\] admits a unique equilibrium.  \end{cor}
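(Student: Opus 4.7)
The plan is to reduce to Theorem \ref{thm:main-1} by verifying the stronger $\bL^{\infty}$-smallness condition \eqref{ass:small-Linf} using $\xi^c = 0$; the bulk of the work is estimating the size of individual endowments from the total endowment and the homogeneity assumption. Throughout, note that the assumptions $\aE\in(\bL^{\infty})^I$ and $\min_i \delta^i\geq \delta_0>0$ give $\aG = \aE/\adelta\in(\bL^{\infty})^I\subseteq \EBMO^I$, so \eqref{ass:G-six-authors} holds componentwise by Remark \ref{rem: exp_moment}.

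\textbf{Step 1 (a fixed-point bound on individual endowments).} Set $M := \max_i \norm{E^i}_{\bL^{\infty}}$ and $\bar E := E_{\Sigma}/I$. Using $\sum_j E^j = E_{\Sigma}$, I write
\[
 E^i - \bar E \;=\; \tfrac{1}{I}\sum_{j}(E^i - E^j),
\]
and invoke the homogeneity bound $\norm{E^i-E^j}_{\bL^{\infty}}\leq \chi^E_0(\norm{E^i}_{\bL^{\infty}}+\norm{E^j}_{\bL^{\infty}})\leq 2\chi^E_0 M$ to obtain $\norm{E^i-\bar E}_{\bL^{\infty}}\leq 2\chi^E_0 M$. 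Combined with $\norm{\bar E}_{\bL^{\infty}}\leq \norm{E_{\Sigma}}_{\bL^{\infty}}/I$ and the triangle inequality, taking the maximum over $i$ yields
\[
 M \;\leq\; \tfrac{1}{I}\norm{E_{\Sigma}}_{\bL^{\infty}} + 2\chi^E_0 M,
\]
which, since $\chi^E_0<\tfrac12$, rearranges to $M\leq \norm{E_{\Sigma}}_{\bL^{\infty}}/\bigl(I(1-2\chi^E_0)\bigr)$.

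\textbf{Step 2 (translation to risk-denominated units and choice of $I_0$).} Using $\delta^i\geq \delta_0$ and $G^i = E^i/\delta^i$, Step 1 gives
\[
 \max_i \norm{G^i}_{\bL^{\infty}} \;\leq\; \frac{\norm{E_{\Sigma}}_{\bL^{\infty}}}{\delta_0\,(1-2\chi^E_0)\,I}.
\]
With $\epsilon_0 := \bigl((3-2\sqrt{2})/4\bigr)^2$, define
\[
 I_0 \;:=\; \left\lfloor \frac{\norm{E_{\Sigma}}_{\bL^{\infty}}}{\epsilon_0\,\delta_0\,(1-2\chi^E_0)}\right\rfloor+1,
\]
so that for $I\geq I_0$ we have $\max_i \norm{G^i}_{\bL^{\infty}} < \epsilon_0$. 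Plugging $\xi^c = 0$ (which trivially lies in $\EBMO$) into \eqref{ass:small-Linf} shows that this bound is exactly what is required.

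\textbf{Step 3 (invoking Theorem \ref{thm:main-1}).} The computation preceding \eqref{ass:small-Linf} in the text gives $\norm{(m^i-m^c,n^i-n^c)}_{\bmo(\PP^c)}\leq 2\sqrt{\norm{G^i-\xi^c}_{\bL^{\infty}}}$, and therefore $H(\aG) < 2\sqrt{\epsilon_0} = \tfrac{3}{2}-\sqrt{2}$. Hypothesis \eqref{eq:ass_main} of Theorem \ref{thm:main-1} holds, so an equilibrium (in fact, a unique one in $\bmo$) exists for the population $(\aE,\adelta)$.

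The only genuinely non-routine part is Step 1, where the self-referential nature of the estimate (each $\norm{E^i}_{\bL^{\infty}}$ is controlled by a constant that itself depends on $M$) must be handled via the $\chi^E_0<\tfrac12$ contraction; everything else is bookkeeping. In particular the dependence of $I_0$ on $(\norm{E_{\Sigma}}_{\bL^{\infty}},\chi^E_0,\delta_0)$ alone — and not on the detailed structure of $E_{\Sigma}$ or the individual $\delta^i$'s — is visible from the explicit formula above.
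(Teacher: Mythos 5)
Your proposal is correct and follows essentially the same route as the paper: reduce to the $\mathbb{L}^\infty$-smallness condition \eqref{ass:small-Linf} with $\xi^c=0$ and use the $\chi^E_0<\tfrac12$ hypothesis to bound $\max_i\norm{E^i}_{\mathbb{L}^\infty}$ by $\norm{E_\Sigma}_{\mathbb{L}^\infty}/\bigl(I(1-2\chi^E_0)\bigr)$. The only difference is cosmetic bookkeeping in Step~1: the paper sums the homogeneity inequality over $j$ and then over $i$, while you obtain the same constant by a cleaner max-based bootstrap via $E^i-\bar E=\tfrac1I\sum_j(E^i-E^j)$; both exploit $\chi^E_0<\tfrac12$ as the crucial contraction.
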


\section{Proofs} \label{sec: proof}
\subsection{Proof of Lemma \ref{lem:CE-Q}}

For the first identity, given $\QQ\in\sQ$, let $Z$ be a continuous
version of the martingale $Z_t = \EE^\PP_t[\tfrac{d\QQ}{d\PP}]$. The $\lpee$-integrability of $Z$
for small enough $p>1$ 
and the convexity of $\vp(z)=z\log z$ imply that $\vp(Z)$ is a uniformly $\PP$-integrable 
submartingale, and, therefore, of class (D) on $[0,T]$.
The semimartingale decomposition 
\[
d\varphi(Z_t) = \tfrac12 Z_t (p_t^2 + q_t^2) dt + \varphi'(Z_t) Z_t p_t
dB_t + \varphi'(Z_t) Z_t q_t dW_t,
\]
where $p \equiv p(\QQ)$ and $q \equiv q(\QQ)$, and a localization argument based on the class (D) property give
\[
H(\QQ|\PP)= \EE^\PP[\varphi(Z_T)] = \tfrac12 \EE^{\PP} \big[\int_0^T Z_t
(p_t^2 + q_t^2) dt\big] = \tfrac12 \EE^{\QQ}\big[\int_0^T (p_t^2 + q_t^2)
dt\big],
\]
where the last equality follows from integration-by-parts and another localization argument.

We now move to the proof of \eqref{eq: exp CE}. We shall prove the special case $\delta = 1$, since the general case follows by simply applying the special case to $G / \delta$. First, assume that $G$ is bounded from below, i.e., $G_- \in \el^\infty$. A use of Jensen's inequality applied to the exponential function yields
 \[
  - \log \EE^\PP[e^{-G}] = - \log
  \EE^{\QQ}\big[\exp\big(- \big(G+ \log
  \tfrac{d\QQ}{d\PP}\big)\big)\big] \leq \EE^{\QQ} \big[G + \log
  \tfrac{d\QQ}{d\PP}\big],
 \]
for all $\QQ\sim\PP$. Furthermore, for $\QQ^G \sim \PP$ satisfying $ d\QQ^G / d\PP =
\tfrac{\exp(-G)}{\EE^\PP[\exp(-G)]}$, which is well defined and an element of $\mathcal{Q}$ because $G_- \in \el^\infty$, we have the equality  $- \log \EE^\PP[e^{-G}] =  \EE^{\QQ^G}  [G + \log
(\QQ^G / d\PP) ]$. Therefore, \eqref{eq: exp CE} follows whenever $G_- \in \el^\infty$. 

For general $G \in \el^0$, it holds that $- \log \EE^\PP[e^{- \max \{G,-n\}}] =  \inf_{\QQ \in \mathcal{Q}} \EE^{\QQ}  [\max \{G, -n\} + \log
( d\QQ / d\PP) ]$ for all $n$ from what we have just proved. Taking the infimum over $n$ in both sides of the last equality, and using the monotone convergence theorem on the left-hand-side, and interchanging the two infima and using the convention regarding expectation from \S \ref{subsec:entropy}, \eqref{eq: exp CE} follows.

\subsection{Proof of Lemma \ref{lem:h}}
 We suppress the subscript $t$ throughout the proof.
  Statement (1) follows by direct inspection and \eqref{equ:sand}. For (2),
  we start by noting that Assumption \ref{ass:f} provides additional bounds
  for second-order partial derivatives of $f$. Indeed, the constants $\delta$ and
    $\Delta$ have the property that 
    \[ \delta \leq \tot(\partial_{11} f + \partial_{22} f) \leq \Delta \quad
    \eand \quad
    \delta^2 \leq \partial_{11} f \partial_{22} f - \partial_{12}^2 f \leq
    \Delta^2 ,\]
    and, so, with 
    $x= \partial_{11} f\geq 0$ and $y=\partial_{22}
    f\geq 0$, we have
    \[  x + y \leq 2 \Delta \quad \eand \quad
    xy \geq \delta^2.\]
It follows immediately that $\Delta - \sqrt{\Delta^2 -
\delta^2} \leq x, y\leq \Delta + \sqrt{\Delta^2 -
\delta^2}$, so both $\partial_{11} f$ and $\partial_{22} f$
are bounded from above and bounded bounded away from $0$, by positive constants that depend only $\delta$ and
$\Delta$.
Since $\partial^2_{12}f \leq \partial_{11} f \partial_{22} f$, hence the absolute values
of all second-order
partial derivatives of $f$ are bounded.

We can deduce from this, 
by the mean-value theorem and the convexity of $f$ in the second argument,
that $q\mapsto \partial_2 f_t(p, q)$ is continuous and strictly (at least linearly, in
fact) increasing,
and that its range is $\R$, for each value of $p$. Consequently, for each
$(p,\nu)\in\R^2$, the equation
  \[ \nu = \partial_2 f(p,q) \]
  has a unique solution, which we denote by $q(p,\nu)$. The
  implicit-function theorem further implies that $q$ is a $C^1$ function of
  both of its arguments. 
Noting that 
  \[ h(p,\nu) = q(p,\nu) \nu - f(p,q(p,\nu)),\]
we conclude that $h\in C^1$ and,
  upon differentiating both sides in both arguments, obtain
  \[ \partial_1 h(p,\nu) = - \partial_1 f(p, q(p,\nu)) \quad \eand  \quad \partial_2
  h(p,\nu) = q(p,\nu).\]
  These relations upgrade the regularity of $h$ to $C^2$ and allow us to
  perform direct computations which yield
 \[
  \partial_{11}h(p, \nu) = -\frac{\text{det}(D^2 f (p,
  q))}{\partial_{22} f(p, q)}, \,
  \partial_{12} h(p, \nu) =-\frac{\partial_{12} f(p,q)}{\partial_{22} f(p, q)}, \,
  \eand 
  \partial_{22} h(p, \nu) =\frac{1}{\partial_{22} f(p, q)}.\]
 The lower bound on $\partial_{22} f$ obtained above, and the
 original bounds from Assumption \ref{ass:f}, imply (2). The equality $D h(0,0)=(0,0)$ is
 a direct consequence of (1). 

For (3), we use the fact that
all second derivatives of $h$ are uniformly bounded, together with
$D h(0,0)=(0,0)$, to conclude that \eqref{equ:der-h-bds},  
for some constant $\Theta$, 
for all $(p,\nu)$. The Lipschitz property \eqref{equ:h-lip} follows from
  \eqref{equ:der-h-bds} by the mean-value theorem.

 Turning to (4), we use the mean-value theorem again to obtain
 \[ h(p,\nu) - p \partial_1 h(p,\nu)  + \tot p^2 \partial_{11} h(\tilde{p},\nu)
 = h(p,0),\]
 for some $\tilde{p}$. It remains to use the bounds in (2) and
   the fact that $h(p,0)\geq 0$, for all $p$.  
   
\subsection{Dynamic monetary utility and its BSDE representation}
A dynamic version of the monetary utility $U$ in \eqref{eq: UE} can be defined for $G \in \el^0$ via
\begin{align}
\label{equ:dyn-U}
 U_t(G) = \essinf \Big\{\EE^\QQ_t \Big[G + \int_t^T f_u(p_u(\QQ), q_u(\QQ)) du\Big]\,
 \Big|\, \QQ \in \mathcal{Q} \Big\}, \quad t\in [0,T].
 \end{align}
The conditional versions of the bounds in \eqref{equ: U-entr-ineq} are, of course, valid. It is shown in \cite{Del-Peng-Gianin} that all time consistent dynamic monetary utilities are of a similar form.

The following characterization of $U=(U_t)_{t\in [0,T]}$ is obtained in \cite[Theorem 2.2]{DelHuBao10}. We record it here in order to also introduce some notation needed for later. Note that it only involves bounded random variables; we shall use this result in a ``localization'' argument in the proof of Proposition \ref{prop: verification}.

 \begin{lem}\label{lem: U BSDE}
  For any $G \in \mathbb{L}^\infty$, $U$ admits a continuous modification which is the unique solution to 
   \begin{equation}\label{eq: BSDE U}
   dU_t = g_t(\mu_t, \nu_t) dt + \mu_t dB_t + \nu_t dW_t, \quad U_T = G,
  \end{equation}
  with $(\mu, \nu)\in \bmo^2$. Above, $g : \Omega \times [0, T] \times \R^2 \mapsto \R$ defined as 
  \[
  g_{\omega, t}(\mu, \nu) = \sup_{p, q\in \mathbb{R}} \big(\mu p + \nu q -
    f_{\omega, t}(p,q)\big), \quad (\omega, t, \mu, \nu) \in \Omega \times [0,T]\times \R^2,
  \]
  is the convex conjugate 
  of the penalty function $f$, and satisfies
  \[
   \tfrac{1}{2\Delta} (\mu^2 + \nu^2) \leq g_{\omega, t} (\mu, \nu) \leq \tfrac{1}{2\delta} (\mu^2 + \nu^2), \quad (\omega, t, \mu, \nu) \in \Omega \times [0, T] \times \R^2.
  \]
\end{lem}
 
 \begin{rem}
Lemma \ref{lem: U BSDE} follows from \cite[Theorem 2.2, $2 \Rightarrow 6$]{DelHuBao10}, which can be generalized to random penalty function satisfying Assumption \ref{ass:f}. (The penalty function $f$ is assumed to be deterministic in \cite{DelHuBao10}.) Indeed, $2 \Rightarrow 3$ and $4 \Rightarrow 6$ in \cite[Theorem 2.2]{DelHuBao10} hold for random function $f$ satisfying uniform growth condition \eqref{equ:sand}, and $3 \Rightarrow 4$ in \cite[Theorem 2.2]{DelHuBao10} is proved in \cite[Theorem 5.2 $(iv) \Rightarrow (v)$]{Jouini-Schachermayer-Touzi}.
\end{rem}

\begin{rem}
In the notation of Lemma \ref{lem: U BSDE}, the probability measure $\hat{\QQ}$, given by
\[
\frac{d\hat{\QQ}}{d\PP} = \mathcal{E} \left(-\int \partial_1 g_u (\mu_u, \nu_u)\, dB_u
  -\int \partial_2 g_u (\mu_u, \nu_u)\, dW_u \right)_T,
\]
is the unique minimizer in \eqref{equ:dyn-U} above. Since $g$ is convex and of quadratic growth in the spatial arguments, its partial derivatives $\partial_j g$, $j=1, 2$, grow at most linearly.  Given that $(\mu, \nu)\in\bmo$, we have $\partial_j g(\mu, \nu)\in \bmo$, as well, and the fact that $\hat{\QQ} \in\sQ$ follows from the reverse H\"{o}lder inequality (cf.  \cite[Theorem 3.1]{Kazamaki}).
\end{rem}

\subsection{Proof of Proposition \ref{prop: Y BSDE}}
Part (i): First, the assumptions $E_+\in \cap_{p>1} \mathbb{L}^p(\PP)$ and $d\QQ^\lambda/d\PP \in \cup_{p>1} \mathbb{L}^p(\PP)$, combined with H\"{o}lder's inequality, imply that $E_+\in \mathbb{L}^1(\QQ^\lambda)$. 
The bounds in \eqref{equ:sand} and Lemma \ref{lem:CE-Q} below it, together with the assumption $\lambda\in \bmo$, imply
\[ \delta\log \EE[ e^{-E/\delta}] \leq \Yl_0 \leq \|E_+\|_{\mathbb{L}^1(\QQ^\lambda)}+ \tfrac12 \Delta\|\lambda\|^2_{\bmo(\QQ^\lambda)}.
\]
Applied conditionally, the same argument can be used to extended the validity
of the above inequalities for each $t\in [0,T]$. It remains to note that
$\delta \log \EE_t[ e^{-E/\delta}] = \delta X^{E/\delta}_t = X^{E,\delta}_t$. 

\medskip

Part (ii): When $E$ is bounded, the claim that $\Yl$ satisfies \eqref{eq: BSDE Y}
follows from an argument similar to the one in  \cite[Theorem 2.2, $2 \Rightarrow
6$]{DelHuBao10}. When, as assumed,  $E/\delta$ belongs to $\EBMO$, 
the BSDE
characterization \eqref{eq: BSDE Y} is
proved using the localization argument of \cite[Theorem 2]{BriHu06}, thanks
to the bounds for $Y$ in (i). 

The remaining question is whether $(\mu, \nu)\in \bmo$. To show
that it is, in fact, true, 
we first note that 
 \begin{equation}
 \label{equ: X 2}
 \begin{split}
   X^{E,\delta}_\sigma  = E &- 
  \int_\sigma^T \Big(\tfrac1{2\delta}( m^{E,\delta}_u)^2 + \tfrac{1}{2\delta}
  ( n^{E,\delta}_u)^2) -\lambda_u ( m^{E,\delta}_u)\Big)\, du\\  &-\int_\sigma^T
   m^{E,\delta}_u\,  dB^\lambda_u
  -\int_\sigma^T  n^{E,\delta}_u\, dW_u,
\end{split}\end{equation}
  for any stopping time $\sigma$. Thanks to 
 \cite[Theorem 3.6]{Kazamaki}, both $m^{E,\delta}$ and $n^{E,\delta}$ (as well as $\ld$) 
belong to $\bmo(\QQ^{\ld})$, and, so,
 both stochastic integrals on the right-hand side of \eqref{equ: X 2}
 above are $\QQ^\lambda$-martingales. The 
 $\bmo(\QQ^{\ld})$-property of $\ld, m^{E,\delta}$ and $n^{E,\delta}$ allows us to conclude, upon
 a projection of both sides on $\sF_{\sigma}$, 
 that $X^{E,\delta}$ is of class (D) under
 $\QQ^\lambda$. Therefore, 
 the bounds in (i) imply that $\Yl$ is
 of class (D) under $\QQ^\lambda$, as well, so we can use a localization
 argument to conclude that
\[
 \EE^{\QQ^\lambda}_\sigma[E] - \Yl_\sigma = \EE^{\QQ^\lambda}_\sigma
 \Big[\int_\sigma^T h(\lambda_u, \nu_u) du\Big], \text{ for each stopping
 time } \sigma.
\]
Thanks to Lemma \ref{lem:h}, part (1), 
the right-hand side is bounded from below by 
$\EE^{\QQ^\lambda}_\sigma [\int_\sigma^T (-\tfrac{\Delta}{2} \lambda^2_u +
\tfrac{1}{2\Delta} \nu_u^2)du]$, while an upper bound for the left-hand side
is given by
 \begin{align*}
 \EE^{\QQ^\lambda}_\sigma[E] - X^{E,\delta}_\sigma 
 &=  \EE^{\QQ^\lambda}_\sigma \Big[\int_\sigma^T \big(\tfrac1{2\delta}
 (m^{E,\delta}_u)^2 + \tfrac1{2\delta}
 (n^{E,\delta}_u)^2 -\lambda_u m^{E,\delta}_u\big) du\Big]\\ 
 &\leq (\tfrac{1}{2\delta}+\tot) \Big( \|\lambda\|^2_{\bmo(\QQ^\lambda)} + 
 \|m^{E,\delta}\|^2_{\bmo(\QQ^\lambda)} +   \|n^{E,\delta}\|^2_{\bmo(\QQ^\lambda)} \Big).
 \end{align*}
These estimates imply that $\nu\in \bmo(\QQ^{\ld})$, and 
the isomorphism theorem \cite[Theorem 3.6]{Kazamaki} implies that
$\nu\in\bmo$, as well. 

To show that $\mu \in \bmo$, we first prove that $Y-X^{E,\delta}$ belongs to $\mathcal{S}^\infty$. Since
$Y- X^{E,\delta}\geq 0$, it will be enough to show that $Y-X^{E,\delta}$ is bounded from above. 
To this end, we compute the semimartingale decomposition of $Y-
X^{E,\delta}$ under
$\QQ^{\ld}$:
\begin{equation}
\label{eq: Y-X}
\begin{split}
 d(Y_t - X^{E,\delta}_t) &= \big[h(\lambda_t, \nu_t) 
 +  m^{E,\delta}_t \lambda_t - \tfrac{1}{2\delta}  \big((m^{E,\delta}_t)^2 +
 (n^{E,\delta}_t)^2\big) \big] dt\\
 & \quad + (\mu_t -  n^{E,\delta}_t)\, dB^\lambda_t + (\nu_t- 
 n^{E,\delta}_t)\, dW_t. 
\end{split}
\end{equation}
Using the lower bound for $h$, the class (D) property of both $Y$ and
$ X^{E,\delta}$ under $\QQ^\lambda$, and 
the fact that $Y_T-  X^{E,\delta}_T =0$, we obtain
\[
 Y_t -  X^{E,\delta}_t \leq \EE^{\QQ^\lambda}_t \big[\int_t^T
 \tfrac{\Delta}{2} \lambda_u^2 -\tfrac{1}{2\Delta} \nu_u^2 - 
 m^{E,\delta}_u
 \lambda_u + \tfrac{1}{2\delta} (m^{E,\delta}_u)^2 + \tfrac{1}{2\delta}(n^{E,\delta}_u)^2 \, du\big],
\]
where the right-hand side is bounded from above, uniformly in $t$, due to the
$\bmo(\QQ^\lambda)$ property of $\lambda, \nu, m^{E,\delta}$ and
$n^{E,\delta}$.

The obtained bounds in $\bmo$ and $\sS^{\infty}$, used together with
It\^{o}'s formula applied to $(Y- X^{E,\delta})^2$ 
and facilitated by \eqref{eq: Y-X}, imply that
$\mu-m^{E,\delta}\in\bmo(\QQ^{\ld})$. Another appeal 
to \cite[Theorem 3.6]{Kazamaki} yields $\mu\in\bmo$.

\medskip

Finally, we turn to uniqueness  for \eqref{eq: BSDE Y}, and consider two solutions
$(Y, \mu, \nu)$ and $(\tilde{Y}, \tilde{\mu}, \tilde{\nu})$. Their
difference $Y-\tilde{Y}$ satisfies
 \[
  d(Y_t-\tilde{Y}_t) = (h(\lambda_t, \nu_t) - h(\lambda_t, \tilde{\nu}_t)) dt + (\mu_t -\tilde{\mu}_t) dB^\lambda_t + (\nu_t -\tilde{\nu}_t) dW_t.
 \]
 By convexity, we have $h(p, \nu) - h(p, \tilde{\nu}) \leq
 q^*(p,\nu) (\nu
 -\tilde{\nu})$, where $q^*(p,\nu)=
 \partial_2 h(p, \nu)$, so that
\begin{align} \label{equ:Y-Y}
  Y_t - \tilde{Y}_t \geq -\int_t^T (\mu_u - \tilde{\mu}_u) dB^\lambda_u
  -\int_t^T (\nu_u - \tilde{\nu}_u) dW^{q^*}_u,
  \end{align}  
  where $q^*$ denotes the process $q^*(\ld,\nu)$. 
  The bounds in \eqref{equ:der-h-bds} in Lemma \ref{lem:h} imply that
  $q^*\in\bmo$. 
 Therefore, the probability measure $\QQ^{\ld,*}$,
 defined by
 $d\QQ^{\lambda, *}/d\PP = \mathcal{E}(-\int\lambda_u dB_u - \int q^*_u
 dW_u)_T$ is well defined. Moreover, 
 thanks to the
 $\bmo$ property of $(\mu, \nu)$ and $(\tilde{\mu}, \tilde{\nu})$,
 the stochastic integrals on the right hand side of \eqref{equ:Y-Y} above
 are $\QQ^{\lambda, *}$-martingales. 
 A projection onto $\sF_t$ under $\QQ^{\lambda, *}$ of both sides of
 \eqref{equ:Y-Y}
 yields $Y\leq \tilde{Y}$. 
 The reverse inequality is proved similarly.

\subsection{Proof of Proposition \ref{prop: verification}}
 The bounds in \eqref{equ:der-h-bds} in Lemma \ref{lem:h} allow 
 $\pil$ to inherit its 
 $\bmo$
 property from $\lambda$ and $(\mul, \nul)$.
 %Formally the process $U^{\pi}_t =U_t(\pi \cdot B^\lambda_T + E) - \pi\cdot
 %B^\lambda_t$ is expected to satisfy
 %\[
  %dU^\pi_t = g(\mu^\pi_t, \nu^\pi_t) dt - \pi_t dB^\lambda_t + \mu^\pi_t dB_t + \nu^\pi_t dW_t, \quad U^{\pi}_T = E.
 %\]
Setting $\bmul = \pil + \mul$ and $\bnul = \nu^\lambda$, we have $\bmul,
\bnul\in\bmo$ and
 \begin{equation}\label{eq: gh}
  h(\lambda, \nul) + \mul \lambda=
  g(\bmul, \bnul) -\pil \lambda,
 \end{equation}
%Therefore both $U^\pi$ and $Y$ satisfy the same BSDE with the same $\bmo$ martingale component. Therefore the uniqueness statement in Proposition \ref{prop: Y BSDE} implies $U^\pi = Y$, which confirms the optimality of $\pi$ thanks to \eqref{eq: dual ineq}. The following result confirms this heuristic argument.
  so that
  \begin{align*}
   d \Yl_t = & \big(g_t(\bmul_t, \bnul_t) -\pil_t \lambda_t\big) dt + (\bmul_t
   - \pi^\lambda_t) dB_t + \bnul_t dW_t\\
   =& g_t(\bmul_t, \bnul_t) dt -\pi^\lambda_t dB^\lambda_t + \bmul_t dB_t + \bnul_t dW_t.
  \end{align*}
  Therefore $\Yl_t + \pil \cdot B^\lambda_t$ satisfies \eqref{eq: BSDE U} with
  the terminal condition $E + \pi\cdot B^\lambda_T$. When $E+\pi^\lambda\cdot
  B^\lambda_T$ happens to be bounded,  uniqueness of 
  \eqref{eq: BSDE U} implies that
 \[
  \Yl_0 = \inf \Big\{\EE^{\QQ} \Big[E + \int_0^T \pi^\lambda_u dB^\lambda_u +
  \int_0^T f_u(p_u, q_u) du\Big]\,\Big|\, \QQ \in \mathcal{Q}\Big\},
 \]
 and the optimality of $\pi$ follows from \eqref{eq: dual ineq}.

  When $E+\pi^\lambda\cdot B^\lambda_T$ is unbounded, we employ a localization
  argument using the nondecreasing sequence $\tau_n = \inf\{t\geq 0
  \,|\, |\Yl_t + \pil \cdot B^\lambda_t|\geq n\}\wedge T$, $n\in \mathbb{N}$
  of stopping times with $\PP[ \tau_n= T]\to 1$. 
  The process $\Yl_t +  \pil\cdot
  B^\lambda_t$ satisfies \eqref{eq: BSDE U} with the bounded terminal condition
  $\Yl_{\tau_n} + \pil \cdot B^\lambda_{\tau_n}$, and, so,  by uniqueness,
  \begin{equation}\label{eq: Yn-rep}
   \Yl_0 = \inf \Big\{\EE^{\QQ} \Big[\Yl_{\tau_n} + \int_0^{\tau_n} \pil_u dB^\lambda_u + \int_0^{\tau_n} f_u(p_u, q_u) du\Big]\,\Big|\, \QQ \in \mathcal{Q}\Big\}.
  \end{equation}
  Therefore, the equality in
  \eqref{eq: Yn-rep} above and the nonnegativity of $f$ yield
   \begin{equation}
   \label{equ: nQ}
   \Yl_0 \leq \EE^{\QQ}\Big[\Yl_{\tau_n} + \int_0^{\tau_n} \pil_u dB^\lambda_u +
   \int_0^T f_u(p_u, q_u) du\Big], \text{ for each } n\in\N \text{ and each }
  \QQ\in \mathcal{Q}.
  \end{equation}
For the first term on the right-hand side, we claim that $\{Y^\lambda_{\tau_n}\}_n$ is bounded from above by a uniformly integrable family under $\QQ$. Indeed, we have from Proposition \ref{prop: Y BSDE} item (i) that $\Yl_t \leq \EE^{\QQ^\lambda}_t[E_+] + \tfrac12 \Delta \|\lambda\|^2_{\bmo(\QQ^\lambda)}$.  On the other hand, $\QQ\in \sQ$ implies that $\tfrac{d\QQ}{d\PP} \in \mathbb{L}^p(\PP)$ for some $p$ sufficiently close to $1$. Moreover, since $\lambda\in \bmo(\QQ^\lambda)$, reverse H\"{o}lder's inequalities (see \cite[Theorem
  3.1]{Kazamaki}) imply that $\tfrac{d\PP}{d\QQ^\lambda} \in \mathbb{L}^{p'}(\QQ^\lambda)$ for some $p'$ sufficiently close to $1$, similarly $\tfrac{d\QQ^\lambda}{d\PP} \in \mathbb{L}^{p^{''}}(\PP)$ for some $p^{''}$ sufficiently close to $1$. Take $s\in (1, p\wedge p'\wedge p^{''})$ and define $q, q'$ and $q^{''}$ via $1/p+1/q=1/p'+1/q' = 1/p^{''} + 1/q^{''} =1$. We have from H\"{o}lder's inequality that 
  \begin{align*}
  & \EE^{\QQ}\Big[\big(\EE^{\QQ^\lambda}_t[E_+]\big)^s\Big] = \EE^{\PP}\Big[\tfrac{d\QQ}{d\PP}\big(\EE^{\QQ^\lambda}_t[E_+]\big)^s\Big]\leq \EE^{\PP}\Big[\big(\tfrac{d\QQ}{d\PP}\big)^p\Big]^{\tfrac{1}{p}} \EE^{\PP}\Big[\big(\EE^{\QQ^\lambda}_t[E_+]\big)^{sq}\Big]^{\tfrac{1}{q}}\\
  & \leq \EE^{\PP}\Big[\big(\tfrac{d\QQ}{d\PP}\big)^p\Big]^{\tfrac{1}{p}} \EE^{\QQ^\lambda} \Big[\tfrac{d\PP}{d\QQ^\lambda} \big(\EE^{\QQ^\lambda}_t[E_+]\big)^{sq}\Big]^{\tfrac{1}{q}} \leq \EE^{\PP}\Big[\big(\tfrac{d\QQ}{d\PP}\big)^p\Big]^{\tfrac{1}{p}} \EE^{\QQ^\lambda} \Big[\big(\tfrac{d\PP}{d\QQ^\lambda}\big)^{p'}\Big]^{\tfrac{1}{qp'}} \EE^{\QQ^\lambda} \Big[E_+^{sq q'}\Big]^{\tfrac{1}{qq'}}\\
  & \leq \EE^{\PP}\Big[\big(\tfrac{d\QQ}{d\PP}\big)^p\Big]^{\tfrac{1}{p}} \EE^{\QQ^\lambda} \Big[\big(\tfrac{d\PP}{d\QQ^\lambda}\big)^{p'}\Big]^{\tfrac{1}{qp'}} \EE^{\PP}\Big[\tfrac{d\QQ^\lambda}{d\PP} E_+^{sq q'} \Big]^{\tfrac{1}{qq'}}\\
  &\leq \EE^{\PP}\Big[\big(\tfrac{d\QQ}{d\PP}\big)^p\Big]^{\tfrac{1}{p}} \EE^{\QQ^\lambda} \Big[\big(\tfrac{d\PP}{d\QQ^\lambda}\big)^{p'}\Big]^{\tfrac{1}{qp'}}  \EE^{\PP}\Big[\big(\tfrac{d\QQ^\lambda}{d\PP}\big)^{p^{''}}\Big]^{\tfrac{1}{qq'p^{''}}} \EE^{\PP} \Big[E_+^{sqq'q^{''}}\Big]^{\tfrac{1}{qq'q^{''}}},
  \end{align*}
  for any $t\in [0,T]$. Therefore (the continuous modification of) the conditional expectation $\EE^{\QQ^\lambda}_\cdot[E_+]$ is a class (D) process under $\QQ$, which confirms the claim that $\{Y^\lambda_{\tau_n}\}_n$ is bounded from above by a uniformly integrable family under $\QQ$. So we can use Fatou's lemma to conclude that
  $\limsup_{n} \EE^{\QQ}[\Yl_{\tau_n}] \leq \EE^{\QQ}[E]$. For the stochastic integral on the right-hand side of \eqref{equ: nQ}, similar argument as above yields
  \begin{equation} \label{equ:Hol-3}
    \EE^{\QQ}\Big[\big| \int_{\tau_n}^T \pil_u dB^\lambda_u\big|\Big] \leq
    \EE^{\PP}\big[\big(\tfrac{d\QQ}{d\PP}\big)^p\big]^{\tfrac1p}
    \EE^{\QQ^\lambda}\big[\big(\tfrac{d\PP}{d\QQ^\lambda}\big)^{p'}\big]^{\tfrac{1}{p'q}}
    \EE^{\QQ^\lambda}\Big[\Big(\int_{\tau_n}^T \pil_u dB^\lambda_u\Big)^{q
    q'}\Big]^{\tfrac{1}{q q'}}, 
  \end{equation}
  where $1/p+1/q=1/p' + 1/q' =1$ and $p, p'$ are sufficiently close to $1$. For the third expectation on the right-hand side, since $\pil \cdot
  B^\lambda \in \BMO(\QQ^\lambda)$, we have $\sup_n \EE^{\QQ^\lambda}
  \big[\big(\int^T_{\tau_n} \pil_u dB^\lambda_u\big)^{2qq'}\big]\leq \| \pil
  \cdot B^\lambda\|^{2qq'}_{\BMO_{2qq'}(\QQ^\lambda)}<\infty$ and 
 the de
  la Vall\'{e}e Poussin theorem implies that $\big(\int^T_{\tau_n} \pil_u
  dB^\lambda_u\big)^{qq'}$ is uniformly integrable in $n$ under
  $\QQ^\lambda$. Thus, the third expectation in \eqref{equ:Hol-3} vanishes as $\tau_n\rightarrow
  T$ and we obtain
  \[
   \Yl_0 \leq \EE^{\QQ} \Big[E + \int_0^T \pil_u dB^\lambda_u + \int_0^T f_u(p_u, q_u) du\Big], \quad \text{ for any } \QQ\in \mathcal{Q}.
  \]
  Therefore,
  \[
    \Yl_0 \leq \inf_{\QQ\in \mathcal{Q}}\EE^{\QQ} \Big[E + \int_0^T \pil_u
    dB^\lambda_u + \int_0^T f_u(p_u, q_u) du\Big] = U\Big(E + \int_0^T \pil_u
    dB^\lambda_u\Big),
  \]
 and the optimality of $\pil$ follows from \eqref{eq: dual ineq}.  Moreover
 the minimal measure is attained at $\hQl\sim\PP$, given by
 $\tfrac{d\hQl}{d\PP} = \mathcal{E}(-\int  \lambda_u dB_u - \int \hql_u
 dW_u)_T$, where  $\hql_t = \partial_2 h(\lambda_t, \nul_t)$. Therefore $\Yl_t +\int_0^t f_u(\lambda_u, \hat{q}^\lambda_u) du$ is a $\hat{\QQ}^\lambda$-martingale.

\medskip

  To prove uniqueness, we take a another optimal strategy $\tilde{\pi}$ and
observe that, thanks to its optimality, the two inequalities in 
 \begin{align*}
  U(E+\tilde{\pi} \cdot B^\lambda_T) &= \inf_{\QQ\in \mathcal{Q}} \EE^{\QQ}
  \big[E+ \tilde{\pi} \cdot B^\lambda_T + \int_0^T f_u(p_u, q_u) \,du\big]\\
  &\leq \EE^{\hQl}\big[E + \tilde{\pi} \cdot B^\lambda_T + \int_0^T
  f_u(\lambda_u, \hql_u) du\big]\\
  &\leq \EE^{\hQl}\big[E + \int_0^T f_u(\lambda_u, \hql_u) du\big] =
  \inf_{\QQ\in \mathcal{M}^\lambda}\big[E+\int_0^T f_u(\lambda_u,
  q_u)du\big],
 \end{align*}
 are, in fact, equalities. 
 In particular, 
 the $\hQl$-supermartingale $\tilde{\pi}\cdot B^\lambda$ is a $\hQl$-martingale, and
 \[
  U(E+\tilde{\pi} \cdot B^\lambda_T) = \EE^{\hQl}\big[E + \tilde{\pi} \cdot
  B^\lambda_T + \int_0^T f_u(\lambda_u, \hql_u)du\big].
 \]
 The previous identity and \cite[Proposition 2.1, item 2)]{DelHuBao10}
 imply that $U_t(E+\tilde{\pi} \cdot B^\lambda_T) +\int_0^t f_u(\lambda_u,
 \hql_u) du$ is a $\hQl$-martingale. $\sF_t$-cash invariance of $U_t$ and the
 $\hQl$-martingale property of $\tilde{\pi} \cdot B^\lambda$ then yield
 that $U_t(E+\int_t^T \tilde{\pi}_u dB^\lambda_u)+ \int_0^t f_u(\lambda_u,
 \hql_u) du$ is a $\hQl$-martingale as well. It is dominated by another
 $\hQl$-martingale, namely, 
 $\Yl_t + \int_0^t f_u(\lambda_u, \hql_u) du$.
 These two martingales, in fact, coincide because they satisfy the same 
 terminal condition. In particular, we have
 \[ Y_t =
 U_t \Big(E + \int_t^T \tilde{\pi}_u dB^\lambda_u\Big),\ t\in [0,T].\]
 It has been shown in \cite[Proposition 2.1, item 1)]{DelHuBao10}
 that $U_t(E+\tilde{\pi} \cdot B^\lambda_T) + \int_0^t f_u(p_u, q_u) du$ is a
 $\QQ$-submartingale, for any $\QQ\in \mathcal{Q}$. 
    This submartingale property combined with
 the $\hQl$-martingale property of $U_t(E+\tilde{\pi} \cdot B^\lambda_T) +
 \int_0^t f_u(\lambda_u, \hql_u) du$ yields
 \[
  U_{t\wedge \tau}(E+ \tilde{\pi}\cdot B^\lambda_T) = \essinf_{\QQ\in \mathcal{Q}} \EE^{\QQ}_{t\wedge \tau} \big[U_\tau(E+ \tilde{\pi}\cdot B^\lambda_T)+ \int_{t\wedge \tau}^{\tau} f_u(p_u, q_u) du\big],
  \]
 for any $[0,T]$-valued stopping time $\tau$.
 In particular, when $\tau=\tau_k$ where $\tau_k = \inf\{t\geq 0: |U_t(E+
 \tilde{\pi} \cdot B^\lambda_T)|\geq k\}\wedge T$, uniqueness for 
 \eqref{eq: BSDE U} with a bounded terminal condition implies that
 \[
  dU_t(E+ \tilde{\pi}\cdot B^\lambda_T) = g_t(\tilde{\mu}_t, \tilde{\nu}_t) dt + \tilde{\mu}_t dB_t + \tilde{\nu_t} dW_t, \quad 0\leq t \leq \tau_k,
 \]
 for some $(\tilde{\mu}, \tilde{\nu})$. Therefore $U_t = U_t(E+\int_t^T \tilde{\pi}_u dB^\lambda_u)$ satisfies
 \[
  dU_t = \big(g_t(\tilde{\mu}_t, \tilde{\nu}_t) - \lambda_t \tilde{\pi}_t\big) dt + (\tilde{\mu}_t -\tilde{\pi}_t) dB_t + \tilde{\nu}_t dW_t.
 \]
 Comparing this with the dynamics  in \eqref{eq: BSDE Y}, and using
 the uniqueness of the
 semimartingale decomposition, we obtain $\mul = \tilde{\mu} - \tilde{\pi}$,
 $\nul=\tilde{\nu} $, and $g_t(\tilde{\mu}, \tilde{\nu})-\lambda \tilde{\pi} =
 h_t(\lambda, \nul) + \lambda \mul$. Therefore
 \[
  \sup_{p\in \mathbb{R}} \big(h_t(p,\nul) + p (\mul +
  \tilde{\pi})\big)=g_t(\mul + \tilde{\pi}, \nul) = h_t(\lambda, \nul) +
  \lambda(\mul + \tilde{\pi}).
 \]
 Concavity of $h_t$ in its first argument yields $\mul + \tilde{\pi} = -
 \partial_1 h_t(\lambda, \nul)$ and confirms $\tilde{\pi} = \pil$.
 
\subsection{Proof of Theorem \ref{thm:char}}
$(1)\Rightarrow (2)$. Given an equilibrium $\lambda\in \Lambda(\aE,\af)$, 
let $\pi^i$ be the primal
optimizer for agent
$i$. The uniqueness statement in Proposition \ref{prop: verification}
identifies 
\[ \pi^i = -\partial_1 h^i(\lambda, \nu^i) -\mu^i,\]
where $(\Yil,
\mu^i, \nu^i)$ is the unique solution of \eqref{eq: BSDE Y} with
terminal condition $\Yil_T = E^i$ and $(\mu^i, \nu^i) \in \bmo$. The
market clearing condition $\sum_i \pi^i = 0$ implies $\sum_i \partial_1
h^i(\lambda, \nu^i) = -\sum_i \mu^i$.

$(2) \Rightarrow (1)$. Given a solution $(\Yil, \mu^i, \nu^i)_i$ to
\eqref{equ:system} with each $(\mu^i,\nu^i)\in \bmo$, we set $\pi^i =
-\partial_1 h^i(\lambda, \nu^i) -\mu^i$. 
Proposition \ref{prop: verification} implies that $\pi^i$ is optimal for
agent $i$ when the market price of risk is $\ld$, 
and the market-clearing condition is satisfied since
$\sum_i \pi^i = - \big( \sum_i
\partial_1 h^i(\lambda, \nu^i) + \sum_i \mu^i\big)=0.$

\subsection{Proof of Theorem \ref{thm: main}} Any object that depends only
on $\delta$ and $\Delta$ from Theorem \ref{thm: main} 
will be called \define{universal}. In particular we will
talk about universal constants $\eps$ and $C$, as well as about a universal
function $\bar{\eps}(M):[0,\eps)\to (0,\infty)$ in the sequel. 
When appearing in the same proof, we allow their values to change from
appearance to appearance, without explicit mention. Moreover, all universal
constants are assumed to be strictly positive. 

We start by setting up a framework for the Banach fixed-point theorem in
the space $\bmo$.  Given $\ld\in\bmo$ and $i \in 1,\dots, I$,  let
$\Yil$ and $(\muil, \nuil)\in\bmo$
be components of the unique solution of 
\[ d\Yil_t = \Big(h^i_t(\ld_t, \nuil_t)+\ld_t \muil\Big)\, dt + \muil_t\, dB_t +
\nuil_t\, dW_t,\qquad \Yil_T= E^i,\]
where 
\[ h^i_{\omega,t}(p,\nu) = \sup_{q\in\R} \Big( q \nu - f^i_{\omega,t}(p,q) \Big), \quad (\omega, t, p,q) \in \Omega \times [0,T]\times \R^2.\]
We fix the random endowments $(E^i)_i$ throughout and remind the reader
that $\Xi$ and $(m^i,n^i)$ are as in \eqref{equ:Xis}.

Let the function $H$ be defined by 
\[
 H_t(p, \anu) = \tfrac{1}{I} \sum_{i=1}^I \partial_1 h^i_t(p, \nu^i), \quad
 \text{ for } t\in [0,T], (p,\anu) \in \R\times\mathbb{R}^I, 
\]
By Lemma \ref{lem:h} items (1) and (2), the function
$p\mapsto \partial_1 h^i_t(p,\nu)$ is strictly decreasing for each $t$, $\nu$ and
$i$,  and its range is $\R$, therefore, $H_t(p, (\nu^i)_i)$ admits an inverse
$H^{-1}_t(\cdot,\anu)$. We use it to define the 
 \define{excess-demand map} $F$ on $\bmo$ by 
 \begin{equation*}\label{eq: demand map}
 F(\lambda_t)= H^{-1}_t(-\tfrac{1}{I} \textstyle\sum_i \muil_t, (\nu^{i, \lambda}_t)_i), \quad t\in [0,T].
 \end{equation*}
The significance of this map lies in the simple fact that $\ld$ is an equilibrium if and only if $F(\ld)=\ld$, i.e., if $\ld$ is a fixed point of $F$.
Our first task is to show that $H^{-1}$
is a Lipschitz function:
\begin{lem}\label{lem: H}
 There exists a universal constant $C$ such that
  \begin{equation}\label{eq:Lip-H}
   |H^{-1}_t(p, \anu) - H^{-1}_t(\tilde{p}, (\tilde{\nu}^i)_i) )| \leq C \Big(
   \abs{p-\tilde{p}}  + \max_i \abs{\nu^i - \tilde{\nu}^i} \Big), 
  \end{equation}
  for all $t\in[0,T]$, $p,\tilde{p}\in\R$, and $\anu, (\tilde{\nu}^i)_i \in \R^I$. 
\end{lem}
\begin{proof}
 The subscript $t$ is suppressed throughout the proof.
 To prove \eqref{eq:Lip-H}, we start from
 \begin{equation}\label{eq:H-est}
 \begin{split}
\abs{H^{-1}(p,\anu) - H^{-1}(\tilde{p},\tanu)} \leq & \quad
\abs{H^{-1}(p,\anu) - H^{-1}(\tilde{p},\anu)}\\ & +
\abs{H^{-1}(\tilde{p},\anu) - H^{-1}(\tilde{p},\tanu)}
\end{split}
\end{equation}
 To estimate the right-hand side, we compute
  \begin{align*} 
  \partial_1 (H^{-1}) ( p , \anu) & = 1/ S(p,\anu), \eand \\
  \partial_{1+j} (H^{-1}) ( p , \anu) &= -
  \oo{I} \partial_{12} h^j(H^{-1}( p , \anu), \nu^j)/ S(p,\anu), \efor
  1\leq j \leq I, 
  \end{align*}
where $S(p,\nu) = \oo{I} \sum_j \partial_{11} h^j(H^{-1} ( p , \anu),
\nu^j)$.
It follows from Lemma \ref{lem:h}, part (2), that 
   \[ |\partial_1 (H^{-1}) (p, \anu)|\leq C \quad \eand \quad |\partial_{1+j} (H^{-1})
   (p, \anu)|\leq C /I,\]
   for each $j$ and all $p\in\R$ and $\anu\in\R^I$. The estimate \eqref{eq:Lip-H} follows by applying the
   mean-value theorem to both terms in \eqref{eq:H-est}.
   %. Therefore \eqref{eq:Lip-F} follows from applying
   %the mean value theorem to both terms on the right-hand side of
   %\eqref{eq:H-est}. Now \eqref{eq:Lip-F} readily follows from
   %\eqref{eq:Lip-H} by choosing $\eta = -\tfrac{1}{I} \sum_i \mu^i,
   %\tilde{\eta} = -\tfrac{1}{I} \sum_i \tilde{\mu}^i, \axi = \anu$, and
   %$\tilde{\axi} = \tilde{\anu}$.
\end{proof}

Next we present a refinement of the classical result on uniform equivalence of bmo spaces (see
\cite[Theorem 3.6]{Kazamaki}),
based on a result of Chinkvinidze and Mania (see \cite{Chikvinidze-Mania}).
\begin{lem}
\label{lem:eq-bmo}
Let $\sigma\in\bmo$ be such that $\norm{\sigma}_{\bmo} =: \sqrt{2} R$ for some $R<1$. If
\,$\hPP\sim \PP$ is such that $\tfrac{d\hPP}{d\PP} = \EN(\sigma\cdot \tilde{B})_T$, for some $\FFF$-Brownian motion $\tilde{B}$, then, for all $\zeta\in\bmo$, we have
  \begin{equation}
  \label{equ:CM}
  (1+R)^{-1}\norm{\zeta}_{\bmo}\leq \norm{\zeta}_{\bmo(\hPP)}\leq (1-R)^{-1}\norm{\zeta}_{\bmo}.
  \end{equation}
\end{lem}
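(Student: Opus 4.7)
The plan is to derive \eqref{equ:CM} as a direct application of the quantitative (sharp-constant) BMO-equivalence theorem of Chikvinidze and Mania \cite{Chikvinidze-Mania}, which is a refinement of Kazamaki's classical result \cite[Theorem 3.6]{Kazamaki}. The hypothesis $\|\sigma\|_\bmo = \sqrt{2}R$ with $R<1$ is precisely the range in which $\EN(\sigma \cdot \tilde B)$ is a true $\PP$-martingale (so that the measure $\hPP$ is well-defined and equivalent to $\PP$) and in which the Chikvinidze-Mania estimate provides the explicit constants $(1\pm R)^{-1}$.

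First, I would rewrite both $\bmo$-norms in their energy form,
\[
\|\zeta\|_{\bmo(\PP^*)}^2 = \sup_{\tau\in\sT}\Bnorm{\EE^{\PP^*}_\tau\Big[\int_\tau^T \zeta_u^2\, du\Big]}_{\linf}, \quad \PP^* \in \{\PP,\hPP\},
\]
which shows that the $\bmo$-norm depends only on the quadratic-variation process $\int \zeta^2\, du$ and the reference measure. In particular, the comparison between $\|\zeta\|_\bmo$ and $\|\zeta\|_{\bmo(\hPP)}$ reduces to a comparison of the $\PP$- and $\hPP$-conditional expectations of the same nonnegative $\sF_T$-measurable random variables, computed pointwise.

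Next, I would apply Chikvinidze-Mania's estimate to the continuous $\PP$-BMO martingale $M := \sigma\cdot \tilde B$, noting that in the present normalization $\|M\|_{\BMO} = \|\sigma\|_\bmo = \sqrt{2}R$. Their main equivalence asserts that for every continuous $\PP$-BMO martingale $N$,
\[
(1+R)^{-1}\|N\|_\BMO \leq \|N\|_{\BMO(\hPP)} \leq (1-R)^{-1}\|N\|_\BMO.
\]
Taking $N$ to be any continuous $\PP$-martingale with $\langle N\rangle = \int_0^\cdot \zeta_u^2\, du$ (for instance $N = \zeta \cdot B$ for some $\FFF$-Brownian motion $B$), and reading these as bounds on $\|\zeta\|_{\bmo(\hPP)}$ in terms of $\|\zeta\|_\bmo$, yields \eqref{equ:CM} exactly.

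The only nontrivial task in executing this plan is the bookkeeping one of verifying that the BMO normalization adopted in \cite{Chikvinidze-Mania} matches the one used here --- specifically, that the factor $\sqrt{2}$ in the assumption $\|\sigma\|_\bmo = \sqrt{2}R$ is consistent with their scale, and that their result is stated so as to give both the upper and the lower bound with the precise constants $(1\pm R)^{-1}$. I do not expect any deeper obstacle: the proof in \cite{Chikvinidze-Mania} rests on an Itô integration-by-parts combined with a Garsia/John--Nirenberg-style iteration, both of which may be cited as a black box.
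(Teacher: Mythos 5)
Your proposal correctly identifies the right reference (Chikvinidze--Mania) and the right object to which it should be applied, but it misattributes the precise form of the bound to that reference and thereby misses the one genuinely non-trivial step of the proof.

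According to the paper, Theorem 2 of \cite{Chikvinidze-Mania} gives, for $\zeta\in\bmo$,
\[
  (1+R)^{-1}\norm{\zeta}_{\bmo} \;\leq\; \norm{\zeta}_{\bmo(\hPP)} \;\leq\; (1+\hat R)\,\norm{\zeta}_{\bmo},
  \qquad \hat R := \sqrt{\tfrac12\,\norm{\sigma}^2_{\bmo(\hPP)}},
\]
so the lower bound does come out in the desired form, but the upper constant is $1+\hat R$, where $\hat R$ is defined through the $\bmo(\hPP)$-norm of $\sigma$, i.e.\ through the \emph{tilted} measure, not through $R=\norm{\sigma}_{\bmo}/\sqrt2$. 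To convert this into the clean $(1-R)^{-1}$ bound one must bootstrap: substitute $\zeta=\sigma$ into the upper inequality to obtain $\sqrt2\,\hat R=\norm{\sigma}_{\bmo(\hPP)}\leq(1+\hat R)\norm{\sigma}_{\bmo}=\sqrt2\,(1+\hat R)R$, whence $\hat R\leq R/(1-R)$ and $1+\hat R\leq(1-R)^{-1}$. Your write-up states that Chikvinidze--Mania ``asserts'' the two-sided bound with $(1\pm R)^{-1}$ directly, and relegates the question of whether this is exactly what they prove to ``bookkeeping.'' That is not bookkeeping but the actual content of the lemma: without the self-referential substitution $\zeta=\sigma$, you have no control of the upper constant in terms of $R$ alone, and the assumption $R<1$ (which makes $(1-R)^{-1}$ finite and is the quantitative heart of the closeness-to-Pareto condition) plays no visible role. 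The rest of your proposal --- the energy form of the $\bmo$-norm, the invariance of quadratic variation under equivalent change of measure, and reducing to an application of the reference --- is fine, and once the bootstrap is added the argument matches the paper's.
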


\begin{proof}
Since $M=\sigma \cdot \tilde{B}$ is a BMO-martingale, Theorem 3.6. in \cite{Kazamaki} states that the spaces $\bmo$ and $\bmo(\hPP)$ coincide and that the
norms $\norm{\cdot}_{\bmo}$ and $\norm{\cdot}_{\bmo(\hat{\PP})}$ are uniformly equivalent. This norm equivalence is refined in \cite{Chikvinidze-Mania}; Theorem 2 there implies that
  \begin{equation}\label{equ:CM-in}
  (1+R)^{-1}
  \norm{\zeta}_{\bmo} \leq \norm{\zeta}_{\bmo(\hat{\PP})} \leq
  (1+\hat{R}) \norm{\zeta}_{\bmo}, \text{ where } \hat{R} = \sqrt{\tot \norm{\sigma}^2_{\bmo(\hPP)}}.
 \end{equation}
 Clearly, only the second inequality in
\eqref{equ:CM}  needs to be discussed; it is obtained by
 substituting $\zeta=\sigma$ into the second inequality in \eqref{equ:CM-in}:
 \[  \sqrt{2} \hat{R} = \norm{\sigma}_{\bmo(\hPP)} = (1+\hat{R}) \norm{\sigma}_{\bmo} \leq \sqrt{2} (1+\hat{R}) R, \text{ so that } (1+\hat{R}) \leq (1-R)^{-1}.\qedhere\]
 \end{proof}

%\hnote{Also
  %\begin{equation}\label{eq:Lip-F}
   %|F(\lambda) - F(\tilde{\lambda})| \leq \Theta \max_i |(\mu^i - \tilde{\mu}^i, \nu^i - \tilde{\nu}^i)|,
  %\end{equation}
%}

To prove the global uniqueness of equilibrium, we record the following
a-priori estimate on $\lambda$ in equilibrium. 
\begin{lem}\label{lem:lambda est}
There exists a universal constant $C$ such that for any equilibrium
$\ld\in\bmo$ 
 \[
  \|\lambda\|_{\bmo} \leq C \max_i \norm{(m^i,n^i)}_{\bmo}.\]
\end{lem}
\begin{proof}
Supposing that $\ld\in\bmo$ is an equilibrium,  
we subtract $X^i$ from $\Yil$,  sum over all $i$, and use 
 the second equation in \eqref{equ:system}, to obtain
 \begin{align*}
  \textstyle d \sum_i (\Yil_t - X_t^i) &= \textstyle \sum_i(\muil_t -m^i_t)\, dB_t + \sum_i
  (\nuil_t - n^i_t)\, dW_t + \\ &
  \quad + \textstyle\sum_i \big( h^i_t(\lambda_t, \nuil_t) - \lambda_t \partial_1
  h^i_t(\lambda_t, \nuil_t) - \tfrac{1}{2\delta_i}((m^i_t)^2+ (n^i_t)^2)\big)\, dt,
 \end{align*}
 where both stochastic integrals on the right-hand side are $\BMO$-martingales.
 Using Lemma \ref{lem:h} part (4), 
 the previous inequality and the fact that $\Yil \geq X^i$,  $\Yil_T=
 X^i_T$, we get 
 \[
  \tfrac{\gamma}{2} I \,\EE_\tau \Big[\int_\tau^T \lambda_u^2 du\Big] \leq
  \tfrac{1}{2\delta} {\textstyle\sum } \,\EE_\tau \Big[ \int_\tau^T (m^i_u)^2 +
  (n^i_u)^2 du\Big] \leq \tfrac{1}{2\delta} I \max_i \|(m^i, n^i)\|^2_{\bmo}. 
 \]
 for each stopping time $\tau$, 
 confirming the claim with $C=1/\sqrt{\delta \gamma}$. 
\end{proof}

For $\lambda\in \bmo$ close enough to $0$, the following estimate gives an
explicit upper bound on the (nonnegative) difference between $D^i = \Yil -
X^i$. In it, we set
 \begin{equation}
 \label{equ:rho}
    r(p) = \frac{ \sqrt{2}(M + p)}{ \sqrt{2}-p}, \quad \ewhere M= \max_i \norm{(m^i,n^i)}_{\bmo}.
 \end{equation}

\begin{lem}\label{lem: D}
There exists a universal constant $C$ such that
 \[ 0\leq \sqrt{D^i} \leq C   r (\norm{\ld}_{\bmo}), \quad \text{ for all } i \text{ and }
 \ld\in\bmo \text{ with } \norm{\ld}_{\bmo}<\sqrt{2}.\]
\end{lem}

\begin{proof}
  The variational definition of  $\Yil$ in \eqref{equ:def-Yl}  yields 
 \[
  \Yil_t \leq \EE^{\QQ^\lambda}_t [E^i] + \tot \Delta
  \|\lambda\|^2_{\bmo(\QQ^\lambda)}, \quad  \ewhere
 \tfrac{d\QQ^\lambda}{d\PP} = \mathcal{E}\big(-\int \lambda_u dB_u\big)_T.\]
With $Z$ denoting the density $Z_t=\EN(-\ld\cdot B)_t$, we have
 \[
  \EE^{\QQ^\lambda}_t[E^i] = \tfrac{1}{Z_t}\EE_t [Z_T E^i ] =
  \tfrac{1}{Z_t}\EE_t [Z_T X^i_T],
 \]
and It\^{o}'s formula implies that
 \[
  \tfrac{1}{Z_t}\EE_t [Z_T X^i_T] = X^i_t + \EE^{\QQ^\lambda}_t
  \Big[\int_t^T \tfrac{1}{2\delta_i}((m^i_u)^2+ (n^i_u)^2) -\lambda_u m^i_u \, du\Big].
 \]
 The previous estimates, combined  with $-\lambda m^i \leq
 \tfrac{\delta_i}{2} \lambda^2 + \tfrac{1}{2\delta_i}(m^i)^2$, produce a
 universal constant $C$ such that
 \[
  D^i_t \leq C \big(\|\lambda\|^2_{\bmo(\QQ^\lambda)} + \|(m^i, n^i)\|^2_{\bmo(\QQ^\lambda)}\big) \leq C \big(\|\lambda\|_{\bmo(\QQ^\lambda)} + \|(m^i, n^i)\|_{\bmo(\QQ^\lambda)}\big)^2,
 \]
 and the statement follows from Lemma \ref{lem:eq-bmo}.
\end{proof}

\begin{lem}\label{lem: mu est}
There exist universal constants $C$ and $\epsilon <\sqrt{2}$ such that 
\[   \|(\muil, \nuil)\|_{\bmo}  \leq  C(M+p^2), \quad \text{ for any } M, p \leq \epsilon,
\]
 where $M= \max_i \norm{(m^i, n^i)}_{\bmo}$ and $p=\norm{\ld}_{\bmo}$.
\end{lem}
\begin{proof}
 For $\ld$ with $p=\|\lambda\|_{\bmo}< \sqrt{2}$,  let $ r(p)$ be as in
 \eqref{equ:rho} and $\cd$ as in Lemma \ref{lem: D} so that
 $0\leq D^i \leq \cd^2  r^2(p)$ and $D^i_T=0$. Since
  \begin{multline*}
  dD^i_t = (\muil_t- m^i_t) dB_t + (\nuil_t - n^i_t) dW_t + \\ +
  \big[h^i_t(\lambda_t, \nuil_t)  
  + \muil_t \lambda_t - \tfrac{1}{2\delta_i}\big((m^i_t)^2+ (n^i_t)^2\big)\big] dt,
  \end{multline*}
 an application of It\^{o}'s formula yields
 \begin{align*}
  d(D^i_t)^2 = &2D_t^i (\muil_t-m^i_t) dB_t + 2D^i_t (\nuil_t - n^i_t) dW_t + \big[(\muil_t - m^i_t)^2 + (\nuil_t - n^i_t)^2\big] dt \\
  & + 2D^i_t \big[h^i_t(\lambda_t, \nuil_t)+\muil_t \lambda_t -
  \tfrac{1}{2\delta_i}\big((m^i_t)^2 + (n^i_t)^2\big)\big] dt.
 \end{align*}
 The stochastic integrals on the right-hand side are martingales, since
 $D^i$ is bounded. Using the fact that $D_T =0$, $D\geq 0$, $h^i(\lambda,
 \nuil) \geq -\tfrac{\Delta}{2} \lambda^2$, and $\muil \lambda \geq
 -\tfrac12 (\muil)^2 -\tfrac12 \lambda^2$, we conclude there exists a
 universal constant $C$ such that
 \begin{align*}
  \EE_\tau \Big[\int_\tau^T \big[(\muil -m^i)^2 + (\nuil-n^i)^2\big]dt
  \Big] \leq &C r^2(p) \big(\|\lambda\|^2_{\bmo} + \|\muil\|^2_{\bmo} + \|(m^i,n^i)\|^2_{\bmo}\big)\\
  \leq & C r^2(p) \big(\|\lambda\|_{\bmo} + \|\muil\|_{\bmo} + M\big)^2.
 \end{align*}
It remains to observe that
 \begin{align*}
  \|(\muil, \nuil)\|_{\bmo} - \|(m^i,n^i)\|_{\bmo} &\leq \|(\muil - m^i, \nuil-n^i)\|_{\bmo} \\
  &\leq C r(p) \big(\|\lambda\|_{\bmo} + \|(\muil,
  \nuil)\|_{\bmo} + M\big).
 \end{align*}
 When $1- Cr(p)>0$, i.e., $1-p/\sqrt{2} - C(M+p)>0$, rearranging the previous inequality yields
 \[
  \|(\mu^{i,\lambda}, \nu^{i,\lambda})\|_{\bmo} \leq \frac{(1-p/\sqrt{2}) M + C(M+p)^2}{1-p/\sqrt{2} - C(M+p)}.
 \]
 There exists a sufficient small universal costant $\epsilon$ such that, when $p, M\leq \epsilon$, we have $1-p/\sqrt{2} - C(M+p) \geq 1/2$, hence 
 \[
  \frac{(1-p/\sqrt{2}) M + C(M+p)^2}{1-p/\sqrt{2} - C(M+p)} \leq C(M+p^2),
 \]
 where the universal constant $C$ on the right-hand side may be different from the one on the left. The statement then follows from combining the previous two inequalities.  \qedhere
\end{proof}

Define $\sB_{\bmo}( p) = \{\lambda \in \bmo\,:\, \|\lambda\|_{\bmo}\leq
p\}$. The following result shows that the excess-demand map $F$ maps
$\sB_{\bmo}( p)$ into itself for an appropriate choice of $ p$, when
$\max_i \|(m^i, n^i)\|_{\bmo}$ is sufficiently small.

\begin{lem}\label{lem: excess demand}
There exist a universal constant $\eps$ and a universal function
$\bar{\eps}:[0,\eps) \to
(0,\infty)$ such that
\begin{enumerate}
\item  $\lim_{M\to 0} \bar{\eps}(M)=0$ and $\lim_{M\to 0} \bar{\eps}(M)/M > \cle$, where
$\cle$  is the
constant of Lemma \ref{lem:lambda est}, and
\item 
$F$ maps $\sB_{\bmo}(\bar{\eps}(M))$ into itself,
as soon as $M=\max_i \norm{(m^i,n^i)}_{\bmo}<\eps$.
\end{enumerate}
\end{lem}
\begin{proof}
Since $H^{-1}(0,0)= 0$,   Lemma \ref{lem: H} guarantees the existence of a
positive universal constant $\ch$ such that
\begin{equation}\label{eq:Flam}
\begin{split}
 \norm{F(\ld)}_{\bmo} = \norm{ H^{-1}(-\tfrac{1}{I} {\textstyle \sum_i} \mu^{i,\lambda}, \anul)}_{\bmo} &\leq
\ch \big(\max_i \|\mu^{i,\lambda}\|_{\bmo} + \max_i \|\nu^{i,\lambda}\|_{\bmo} \big),\\
& \leq 2 \ch \max_i \|(\mu^{i,\lambda}, \nu^{i,\lambda})\|_{\bmo},
\end{split}
\end{equation}
for all $\ld\in\bmo$.

  With $M=\max_i \norm{(m^i,n^i)}_i$, $p=\|\lambda\|_{\bmo}$, and $\cme$, $\eme$ 
  denoting the constants from Lemma \ref{lem: mu est}, we have
   \[ 
   \norm{F(\ld)}_{\bmo} \leq   2\ch \cme (M+p^2),
   \] 
   for any $p, M\leq \eme$. Choosing a universal constant $C$ larger than $2\ch \cme$ and $\cle$, we have from the previous inequality that 
   \[
    \norm{F(\ld)}_{\bmo} \leq C(M+p^2).
   \]
   There exists a universal constant $\eps_0\leq \eme$ such that the quadratic equation $f(p) := C(M+p^2) -p =0$ admits at least one solution, whenever $M\leq \eps_0$. Denote the smaller solution as $\bar{\eps}(M)$. The expression of $\bar{\epsilon}(M)$ yields $\lim_{M\to 0} \bar{\eps}(M) =0$. Mover the equation $f(p)=0$ implies $\liminf_{M\to 0} \tfrac{\bar{\eps}(M)}{M} =\liminf_{M\to 0} \tfrac{C(M + \bar{\eps}(M)^2)}{M} \geq C \geq \cle$. It is then easy to see that
   \[
    \|F(\lambda)\|_{\bmo} \leq C(M + \bar{\eps}(M)^2) = \bar{\eps}(M),
   \]
   for any $\lambda$ with $\|\lambda\|_{\bmo} \leq \bar{\eps}(M)$. 
\end{proof}

\begin{lem}
\label{lem:67DB}
There exists universal constants $\eps,C$ such that, 
if   $\max_i \norm{(m^i,n^i)}_{\bmo}\leq \eps$, then for any $\lambda, \tilde{\lambda}$ satisfying $\|\lambda\|_{\bmo}, \|\tilde{\lambda}\|_{\bmo} \leq \bar{\eps}(\max_i \|(m^i, n^i)\|_{\bmo})$, we have 
\[ \norm{ F(\ld) - F(\tld)  }_{\bmo} \leq
C(L^{\ld}+L^{\tld})
 \norm{\ld - \tld}_{\bmo},\]
where
   $L^{\ld}= \norm{\ld}_{\bmo} +
   \max_i\norm{ (\muil,\nuil)}_{\bmo}$ and
   $L^{\tld}= \norm{\tld}_{\bmo} +
   \max_i\norm{ (\muilt,\nuilt)}_{\bmo}$.
\end{lem}
\begin{proof}
In the first part of the proof we suppress the index $i$ notationally, as
we will be focusing on a single-agent $\Yl$. 
For $\ld,\tld\in\bmo$ with $\norm{\ld}_{\bmo},
\norm{\tld}_{\bmo}<\sqrt{2}$ and denote by $(Y,\mu,\nu)=(\Yl,\mul,\nul)$ and
$(\tY,\tmu,\tnu)=(Y^{\tld}, \mu^{\tld},\nu^{\tld})$,
the corresponding solutions to \eqref{eq: BSDE Y}. By the argument in the
proof of Lemma
\ref{lem: D}, helped by that fact that it terminates at $0$, the process $\delta Y = Y- \tY$ belongs to
$\sS^{\infty}$. 
 We set $\overline{\lambda} =
 (\lambda + \tilde{\lambda})/2$ and $\overline{\mu} = (\mu +
 \tilde{\mu})/2$ so that 
 \begin{align*}
  d\delta Y_t &= (\mu_t -\tilde{\mu}_t) dB_t + (\nu_t - \tilde{\nu}_t) dW_t + \big(h_t(\lambda_t, \nu_t) - h_t(\tilde{\lambda}_t, \tilde{\nu}_t) + \mu_t \lambda_t - \tilde{\mu}_t \tilde{\lambda}_t\big) dt\\
  &= (\mu_t -\tilde{\mu}_t) dB^{\overline{\lambda}}_t + (\nu_t - \tilde{\nu}_t) dW_t^{\overline{\nu}} + \big(h_t(\lambda_t, \nu_t) - h_t(\tilde{\lambda}_t, \nu_t) + \overline{\mu}_t (\lambda_t - \tilde{\lambda}_t)\big) dt.
 \end{align*}
 Here
 \[
  \overline{\nu} = \left\{\begin{array}{ll} \frac{h(\tilde{\lambda}, \nu) - h(\tilde{\lambda}, \tilde{\nu})}{\nu - \tilde{\nu}} & \nu \neq \tilde{\nu} \\ 0 & \text{otherwise}\end{array}\right.,
 \]
 which satisfies
 \begin{equation}\label{eq:bar nu}
  \|\bar{\nu}\|_{\bmo} \leq \Theta (\|\tilde{\lambda}\|_{\bmo} + \|\nu\|_{\bmo} + \|\tilde{\nu}\|_{\bmo}),
 \end{equation}
 thanks to \eqref{equ:h-lip}. Then $B^{\overline{\lambda}} = B + \int_0^\cdot \overline{\lambda}_t\,
 dt$, $W^{\overline{\nu}} = W + \int_0^ \cdot \overline{\nu}_t\, dt$ are 
 Brownian motions under $\QQ^{\overline{\lambda}, \overline{\nu}}$. Utilizing \eqref{equ:h-lip} again, there exists a universal constant $C$ such that
 \[
  |\delta Y_t| \leq C \,\EE_t^{\QQ^{\overline{\lambda},
  \overline{\nu}}}\Big[\int_t^T \big(|\lambda_u| + |\tilde{\lambda}_u| +
  |\nu_u| + |\overline{\mu}_u|\big) |\lambda_u -\tilde{\lambda}_u| \,
  du\Big],
 \]
 and the Cauchy-Schwartz inequality yields
 \begin{align}\label{deltaY est}
 \|\delta Y\|_{\mathcal{S}^\infty} \leq  C
 (\bar{L}^{\ld}+\bar{L}^{\tld}) \|\lambda
 -\tilde{\lambda}\|_{\bmo(\QQ^{\overline{\lambda}, \overline{\nu}} )}.
 \end{align}
 where $\bar{L}^{\ld}$ and $\bar{L}^{\tld}$ are analogous to $L^{\ld}$ and
 $L^{\tld}$, but with the $\bmo$-norms computed under 
  $\QQ^{\overline{\lambda}, \overline{\nu}}$. 
Next, by 
 It\^{o}'s formula, we have
 \begin{multline*}
  d(\delta Y_t)^2 =  2 \delta Y_t (\mu_t -\tilde{\mu}_t)
  dB_t^{\overline{\lambda}} + 2 \delta Y_t (\nu_t - \tilde{\nu}_t)
  dW_t^{\overline{\nu}} + \big((\mu_t- \tilde{\mu}_t)^2 + (\nu_t -
  \tilde{\nu}_t)^2\big) dt + \\
  + 2 \delta Y_t \big(h_t(\lambda_t, \nu_t) - h_t (\tilde{\lambda}_t, \nu_t) +
  \overline{\mu}_t(\lambda_t - \tilde{\lambda}_t)\big) dt, 
 \end{multline*}
 so that, thanks to \eqref{equ:h-lip}, 
 \eqref{deltaY est}, and the fact that $\delta Y_T=0$ we obtain
 \begin{align*}
  \EE^{\QQ^{\overline{\lambda}, \overline{\nu}}}\Big[\int_\tau^T
 (\mu_u-\tilde{\mu}_u)^2 + (\nu_u - \tilde{\nu}_u)^2 \,du\Big] &\leq C
 \|\delta Y\|_{\mathcal{S}^\infty} (\bar{L}^{\ld}+\bar{L}^{\tld})
 \|\lambda
 -\tilde{\lambda}\|_{\bmo(\QQ^{\overline{\lambda}, \overline{\nu}} )}\\
 &\leq C  (\bar{L}^{\ld}+\bar{L}^{\tld})^2
 \|\lambda
 -\tilde{\lambda}\|_{\bmo(\QQ^{\overline{\lambda}, \overline{\nu}} )}^2,
 \end{align*}
 for any stopping time $\tau$.
 This, in turn, implies 
 \begin{equation}\label{eq:con-est}
  \|(\mu, \nu) - (\tilde{\mu},
  \tilde{\nu})\|_{\bmo(\QQ^{\overline{\lambda}, \overline{\nu}})}  \leq  C
  (\bar{L}^{\ld}+\bar{L}^{\tld}) \|\lambda
  -\tilde{\lambda}\|_{\bmo(\QQ^{\overline{\lambda}, \overline{\nu}} )}.
\end{equation}

The definition of $F$ and Lemma \ref{lem: H} imply that it will be enough to 
replace all $\bmo$-norms under
$\QQ^{\overline{\ld},\overline{\nu}}$ in \eqref{eq:con-est} above, as well
as in the expression for $\bar{L}^{\ld}$, $\bar{L}^{\tld}$, by those under $\PP$, perhaps after
enlarging the universal constant $C$. To do that, for $M= \max_i \|(m^i, n^i)\|_{\bmo} \leq \eed$, where $\eed$ is the universal constant in Lemma \ref{lem: excess demand}, take any $\|\lambda\|_{\bmo}, \|\tilde{\lambda}\|_{\bmo}\leq \bar{\eps}(M)$. Lemma \ref{lem: mu est} implies that 
\begin{equation}\label{eq: mu tmu}
 \|(\mu, \nu)\|_{\bmo}, \|(\tilde{\mu}, \tilde{\nu})\|_{\bmo} \leq C (M + \bar{\epsilon}(M)^2).
\end{equation}
Set $R = \tfrac{1}{\sqrt{2}} \|(\bar{\lambda}, \bar{\nu})\|_{\bmo}$. Combining \eqref{eq:bar nu}, \eqref{eq: mu tmu}, and $\lim_{M\to 0} \bar{\eps}(M) =0$, we can choose sufficiently small $\eps$ so that $R<1$ when $M\leq \epsilon$. Then applying Lemma \ref{lem:eq-bmo} to both sides of \eqref{eq:con-est}, we obtain
\begin{equation}\label{eq:con-est-P}
 \|(\mu, \nu) - (\tilde{\mu}, \tilde{\nu})\|_{\bmo} \leq C \frac{1+R}{(1-R)^2} (L^\lambda + L^{\tilde{\lambda}}) \|\lambda - \tilde{\lambda}\|_{\bmo}.
\end{equation}
Finally, Lemma \ref{lem: H} and an estimate similar to \eqref{eq:Flam} imply
\[
 \|F(\lambda) - F(\tilde{\lambda})\|_{\bmo} \leq 2 \ch \max_i \|(\mu^i, \nu^i) - (\tilde{\mu}^i, \tilde{\nu}^i)\|_{\bmo}. 
\]
The proof is concluded after combining and last two inequalities and reintroducing the index $i$ to the left-hand side of \eqref{eq:con-est-P}.
\end{proof}

\begin{proof}[Proof of Theorem \ref{thm: main}]
We pick the constant $M$ sufficiently small that $\bar{\eps}(M)$ of Lemma \ref{lem:
excess demand} is well defined, and  has the 
following properties:
\begin{enumerate}
  \item $\cle\, M \leq  \bar{\eps}(M)$, where $\cle$ is as in Lemma
  \ref{lem:lambda est}.
  \item when $\norm{\ld}_{\bmo}\leq \bar{\eps}(M)$, we have
   $L^{\ld} \leq \tfrac{1}{3\clip}$, where $L^{\ld}$ and $\clip$ are as in Lemma \ref{lem:67DB}, 
\end{enumerate}
Item (1) can be achieved thanks to Lemma
\ref{lem: excess demand} item (1), and item (2) can be satisfied thanks to \eqref{eq: mu tmu} and Lemma \ref{lem: excess demand} item (1). 

Assuming that $\max_i\norm{(m^i,n^i)}_{\bmo}\leq M$, Lemma \ref{lem: excess
demand} implies that $F$ maps $\sB_{\bmo}(\bar{\eps}(M))$ into itself. Moreover, item
(2) above and Lemma \ref{lem:67DB} imply that $F$ is a contraction on
$\sB_{\bmo}(\bar{\eps}(M))$. Therefore, by the Banach fixed point theorem, $F$
admits a unique fixed point in $\sB_{\bmo}(\bar{\eps}(M))$. This implies
immediately that the system  \eqref{equ:system} admits
a solution $(\aY, \amu, \anu)$ with $(\amu, \anu)\in \bmo^{2I}$, making $\lambda$ an equilibrium
 by Theorem \ref{thm:char}.  
 
Turning to uniqueness, Lemma \ref{lem:lambda est} implies that any
equilibrium needs to be in the ball of radius $ \cle M$, which is less than $\bar{\eps}(M)$ due to item (1) above. We have already established the uniqueness
of equilibria in $\sB_{\bmo}(\bar{\eps}(M))$, so the equilibrium $\ld$,  as well
as the associated solution $(\aY, \amu,\anu)$ of \eqref{equ:system}
constructed above, are globally unique. 
\end{proof}

\subsection{Proof of Corollary \ref{cor: homo}}
We sum both sides of 
of $\norm{E^i- E^j}_{\linf} \leq \chi^E_0 (\norm{E^i}_{\linf} +
\norm{E^j}_{\linf})$ over $j$ to obtain
\begin{multline*}
 I \norm{E^i}_{\linf} - \norm{E_\Sigma}_{\linf} \leq \|I E^i - \tsum_j E^j\|_
 {\linf} \leq \tsum_j \norm{E^i - E^j}_{\linf} \leq \\
  \leq \chi^E_0 I \norm{E^i}_{\linf} + \chi^E_0 \tsum_j \norm{E^j}_{\linf},
\end{multline*}
which implies that
\[
 (1- \chi^E_0) \norm{E^i}_{\linf} \leq \tfrac{1}{I} \norm{E_\Sigma}_{\linf} + \chi^E_0 \tfrac{1}{I} \tsum_j \norm{E^j}_{\linf}.
\]
Summing the obtained inequalities over $i$, we get
\[
 \tsum_i \norm{E^i}_{\linf} \leq \tfrac{1}{1-2 \chi^E_0} \norm{E_\Sigma}_{\linf}.
\]
The previous two inequalities combined then imply
\[
 \norm{E^i}_{\linf} \leq \tfrac{1}{1-2 \chi_0^E} \tfrac{1}{I} \norm{E_\Sigma}_{\linf}, \text{ for all } i.
\]
On the other hand, Proposition \ref{prop:ebmo} part (4) implies that
\[
 \|(m^i, n^i)\|^2_{\bmo} \leq 4 \delta^i \|E^i\|_{\linf} \leq 4 \Delta \|E^i\|_{\linf} \leq \tfrac{4\Delta}{1-2\chi^E_0} \tfrac{1}{I} \|E_\Sigma\|_{\linf}, \quad \text{ for all } i.
\]
Then right-hand side is smaller than $M$ in \eqref{equ:mn-bmo} for $I$ larger than some $I_0$, and the existence of equilibrium follows from Theorem \ref{thm: main}.

\appendix

\section{Characterization and properties of $\EBMO$}\label{app: EBMO}

The entropic $\BMO$ space introduced in Definition \ref{def:1F98} can be characterized via the reverse H\" older inequality, which is 
equivalent to the membership in $\BMO$; cf.
\cite[Theorem 3.4]{Kazamaki}. 
\begin{prop} \label{pro:ebmo-char}
The random variable $E$ is in $\EBMO$ 
if and only if $e^{-E} \in \lone$ and 
there exist constants $p>1$ and  $C>0$
such that for each stopping time $\tau$, we have
\[ \EE[ e^{-pE}|\sF_{\tau}] \leq C (\EE[e^{-E}|\sF_{\tau}])^{p}.\]
\end{prop}
Somewhat weaker statements in the following result will, perhaps, shed more light on the structure of
$\EBMO$:
\begin{prop}  \label{prop:ebmo}
The following hold:
\begin{enumerate}
\item If $E\in\EBMO$, then $E/\alpha \in \EBMO$ for each $\alpha>1$, 
\item If $E\in\EBMO$ then  $e^{-E} \in \cup_{p>1} \lpee$.
\item If $H\in\BMO$ is positive and bounded away from $0$, then $\log H \in
\EBMO$. 
\item $\linf \subseteq \EBMO$; in fact, if $E\in\linf$ then $\norm{E}_{\EBMO} \leq 2 \norm{E}^{1/2}_{\linf}$. 
\end{enumerate}
\end{prop}
\begin{proof} (1) and (2) follow directly from Proposition \ref{pro:ebmo-char}.
For (3), we note that the strictly positive BMO-martingale $h_t=\EE_t[H]$ admits a stochastic logarithm $M_t = \int_0^t h_t^{-1}\, dh_t$. Moreover,
since $h$ is bounded away from zero, the quadratic variation of $M$ is bounded
from above by a constant multiple of the quadratic variation of $h$, and,
so, $M\in\BMO$, i.e., $\log H\in\EBMO$. 
The fact that  $\linf \subseteq \EBMO$ is a direct consequence of the fact that $\linf\subseteq\BMO$. Furthermore, let $N$ be the 
continuous martingale given by $N_t = \EE_t[ e^{-E}]$.
Since $N$ is an $\ltwo$-martingale bounded away from zero, the process $M$ defined via $M = - \int_0^\cdot  N_u^{-1}\, dN_u$, so that $\log N = \log N_0
- M - (1/2) \ab{M}$, is also an $\ltwo$
martingale. Moreover, we have
\begin{align*}
\tot \EE_t [ \ab{M}_T - \ab{M}_t] &= \EE_t[ (\tot \ab{M}_T + M_T) -  
(\tot \ab{M}_t + M_t)] \\ & = \EE_t[ \log(N_t/N_T)]\leq 2 \norm{E}_{\linf},
\end{align*}
and $\norm{E}_{\EBMO} \leq 2 \norm{E}^{1/2}_{\linf}$ follows directly from the fact that $\norm{E}_{\EBMO} = \norm{M}_{\BMO}$.
\end{proof}

Before we give another useful sufficient condition for membership in $\EBMO$, let us
recall briefly the notion of Malliavin differentiation on Wiener space. Let $\Phi$ be the set of random variables of the form $\varphi(\mathcal{I}(\eta^1), \dots,
\mathcal{I}(\eta^k))$, where $\varphi \in C^\infty_b(\mathbb{R}^k, \mathbb{R})$
(smooth functions with bounded derivatives of all orders) for some $k$,
$\eta^j=(\eta^{j,b}, \eta^{j,w})\in \mathbb{L}^2([0,T]; \mathbb{R}^2)$  and
$\mathcal{I}(\eta^j)= \eta^{j,b}\cdot B_T + \eta^{j,w}\cdot W_T$, for each $j=1, \dots,
k$. If $\zeta = \varphi(\mathcal{I}(\eta^1), \dots,
\mathcal{I}(\eta^k)) \in \Phi$, we define its \define{Malliavin derivative} as the
$2$-dimensional process
\[
 D_\theta \zeta = \sum_{j=1}^k \frac{\partial \varphi}{\partial x_j}
 (\mathcal{I}(\eta^1), \dots, \mathcal{I}(\eta^k)) \eta^j_\theta, \quad
 \theta\in [0,T],
\]
and denote by $D^b \zeta$ and $D^w \zeta$ the two component processes of 
$D \zeta$. For  $\zeta\in \Phi$ and  $p\geq 1$, we define the norm
\[
 \norm{\zeta}_{1,p}= \bra{\EE\bra{|\zeta|^p + \pare{\int_0^T |D_\theta
 \zeta|^2 d\theta}^{p/2}}}^{1/p},
\]
and let the Banach space $\mathbb{D}^{1,p}$ be the closure of
$\Phi$ under $\norm {\cdot}_{1,p}$. We say that a random variable $E$ is
\define{Malliavin-Lipschitz} if $E\in \bD^{1,2}$ and
$D^b E, D^w E\in \sinf$. The constant
\[ L=\Bnorm{\sqrt{\abs{D^b E}^2+\abs{D^w E}^2}}_{\sS^{\infty}}\]
is called the \define{Lipschitz constant} of $E$. 

In a Markovian setting, where $E =g(B_T, W_T)$, for some function $g$, $E$ is Malliavin-Lipschitz whenever $g$ is a Lipschitz function, and the Lipschitz constant of $E$ is the Lipschitz constant of $g$.

\begin{prop}  \label{prop:suf-ebmo}
If $E$ is Malliavin-Lipschitz with the Lipschitz constant $L$ then 
$E/\delta \in\EBMO$ and $\norm{E}_{\EBMO,\delta}\leq L \sqrt{T}$, for each
$\delta>0$. 
\end{prop}

\begin{proof} 
By the Clark-Ocone formula the components $\om$ and $\on$ in the 
 martingale representation $E = \EE[E] + \bar M_T = \EE[E] + \om\cdot B_T+\on\cdot W_T$ satisfy
\[  \om_t = \EE_t[ D^b E] \text{ and } \on_t = \EE_t[ D^w E], \text{ a.s.,
for each } t\in [0,T],\]
and, therefore, admit versions with $\sqrt{(\om_t)^2+(\on_t)^2} \leq L$, for
each $t\in [0,T]$, a.s. 
 As a result, $\ab{\bar M}_T\leq L$ and 
 Bernstein inequality (see Equation (4.i) in \cite{Barlow-et-al}),
 implies that $E$ has (at most) Gaussian tails. In particular, $e^{-
 E}\in\ltwo$. Coupled with the boundedness of the Malliavin derivatives of
 $E$, this fact implies that
$e^{-E} \in \bD^{1,2}$ and, consequently, with equalities interpreted in
the sense of modifications, 
  \begin{align*} V_t = \EE_t[ e^{-E}]\in\bD^{1,2} \eand D^k_\theta V_t = -
  \EE_t[e^{-E} D^k_\theta E] 
  \end{align*} 
  for all $\theta\leq t\leq T$ and
  $k=b$ or $w$.
  Applying
     Clark-Ocone formula to $V_t$ yields \[ V_t = \EE[V_t] + \int_0^t
     \EE_\theta[D^b_\theta V_t] dB_\theta + \int_0^t \EE_\theta[D^w_\theta
     V_t] dW_\theta.  \] 
     On the other hand, $dV_\theta = - V_\theta
     m_\theta dB_\theta - V_\theta n_\theta dW_\theta$, and, so, 
     $\EE_\theta[D^b_\theta V_t] = -V_\theta m_\theta$ and $\EE_\theta
     [D^w_\theta V_t] = -V_\theta m_\theta$, for $\theta \leq t$. Hence, \[
     m_\theta = - \frac{\EE_\theta[D^b_\theta V_t]}{V_\theta} =
     \frac{\EE_\theta[e^{-E} D^b_\theta E]}{\EE_\theta[e^{-E}]}\leq
     \norm{D^b E}_{\sinf}, \] which implies $\norm{m}_{\sinf}\leq \norm{D^w
     E}_{\sinf}$.  Similarly, $\norm{n}_{\sinf}\leq \norm{D^w
     E}_{\sinf}$, and the bound in (2) follows immediately.\qedhere
 \end{proof}

\bibliographystyle{amsalpha}
\bibliography{./biblio}

\end{document}